\documentclass[12pt]{amsart}
\usepackage[margin=1in]{geometry}
\usepackage{amsmath,amssymb,amsthm,mathtools}
\usepackage{array}
\usepackage{tikz}
\usepackage{xcolor}
\usepackage{microtype}
\usepackage[colorlinks=true,linkcolor=blue,citecolor=blue,urlcolor=blue]{hyperref}
\usepackage{placeins}
\usepackage{float}
\usepackage{cleveref}
\theoremstyle{plain}
\newtheorem{theorem}{Theorem}[section]
\newtheorem{lemma}[theorem]{Lemma}
\newtheorem{proposition}[theorem]{Proposition}

\newtheorem{conjecture}[theorem]{Conjecture}
\newtheorem{question}[theorem]{Question}
\theoremstyle{definition}
\newtheorem{definition}[theorem]{Definition}
\newtheorem{example}[theorem]{Example}
\newtheorem{remark}[theorem]{Remark}

\newcommand{\T}{\mathcal T}
\newcommand{\stir}[2]{\genfrac\{\}{0pt}{}{#1}{#2}}
\DeclareMathOperator{\stnd}{st}

\title[Contraction containment in labeled trees]
{Contraction Containment in Labeled Trees:\\
Support Counts, Collision Cores, and Star Avoidance}
\author{Levi Segal}
\address{Stanford Online High School, 415 Broadway Academy Hall, Floor 2, 8853, 415 Broadway, Redwood City, CA 94063, USA}
\email{levisegal0@gmail.com}
\subjclass[2020]{05C05, 05C30, 05A15, 05A16, 60C05}
\keywords{labeled trees, contraction containment, collision cores, Cayley trees, star avoidance, generating functions, probabilistic method}
\date{}

\begin{document}

\begin{abstract}
We study whether a labeled tree can be recovered from the numbers of larger
labeled trees that contract to it.  
We first characterize containment using the edge splits induced on the
surviving vertices after contraction and derive an exact formula for containing trees with one marked
display.  We then study several displays in the same containing tree simultaneously.
After contracting every edge invisible to all \(k\) displays of an
\(n\)-vertex target, the remaining collision core has at most
\(k(n-1)+1\) vertices.  Once the coincidences and relative positions of the
surviving labels are recorded, these bounded cores give a finite exact
expansion of each collision count.
As applications, we prove that almost every sufficiently large labeled tree
contains any fixed target, with an exponentially small exceptional
proportion.  We also obtain an exact avoidance formula and generating
function for stars whose center is represented by the largest host label,
together with fixed-star asymptotics and a coarse threshold.
\end{abstract}

\maketitle

\section{Introduction and Main Results}\label{sec:introm}
Ordinary edge contraction preserves the overall shape of a tree but forgets
which endpoint of each contracted edge survives.  For labeled trees, that
choice is part of the combinatorial data.  We therefore direct a contraction
toward the endpoint whose label is retained.  After a sequence of directed
contractions, the surviving labels are replaced by
\(1,2,\ldots\) in increasing order respecting the order of the vertices, we call this \emph{standardization}.  If a tree \(U\) on \([m]\) can be
reduced in this way to a tree \(T\) on \([n]\), we write
\[
T\preceq U
\]
and say that \(U\) contains \(T\).  The tree \(T\) is the \emph{target} and
\(U\) is the \emph{host}. We call \emph{survivor vertices} the remaining unstandardized labels of the vertices which are not contracted after a sequence of contractions.

A contraction leading from \(U\) to \(T\) partitions the vertices of \(U\)
into \(n\) connected blocks.  Each block is contracted to one retained
vertex, and the increasing order of these \(n\) survivors determines the
labels of the quotient.  Informally, a \emph{display} records these connected
blocks and the survivor chosen in each one; it contains the data of which vertices were retained and what were the connected blocks around each that were contracted down to the survivors. Define a \emph{marked count} to be the number of distinct displays of $T$ in $U$. The precise
definition is given in Section~\ref{sec:displays}.

Let \(\T_m\) denote the set of labeled trees on \([m]\).  For a fixed target
\(T\in\T_n\), its \emph{support count} at order \(m\) is
\[
\mu_T(m)=\#\{U\in\T_m:T\preceq U\}.
\]
Thus every host contributes once, independently of the number of displays it
admits.  In this paper, we are interested in the extent to which the sequence
\(\bigl(\mu_T(m)\bigr)_{m\ge n}\) determines the target.

Although, directed labeled contraction has symmetry. Relabeling the vertices of a tree $T$ on n vertices by sending the i-th vertex
\(i\mapsto n+1-i\) commutes with directed contraction and increasing
standardization. If we let \(T^{\mathrm{rev}}\) denotes the resulting reversal,
then one can simply construct a bijection between the host trees of each at every order and, consequently,
\[
\mu_T(m)=\mu_{T^{\mathrm{rev}}}(m)
\qquad(m\ge n).
\]
This motivates the following wilf conjecture.

\begin{conjecture}[Reversal-Wilf rigidity]
\label{conj:RW}
For $T_1,T_2\in\T_n$,
\[
\mu_{T_1}(m)=\mu_{T_2}(m)\quad\text{for every }m\ge n
\quad\Longrightarrow\quad
T_2=T_1\ \text{or}\ T_2=T_1^{\mathrm{rev}}.
\]
\end{conjecture}

We call two targets with the same support sequence \emph{Wilf equivalent},
by analogy with permutation patterns; see Simion and
Schmidt~\cite{SimionSchmidt} and B{\'o}na~\cite{Bona}.  Dotsenko
\cite{Dotsenko} studied pattern avoidance in planar rooted labeled trees
under a different containment relation.  Our objects are unrooted Cayley
trees, and containment is defined by directed contraction.
Conjecture~\ref{conj:RW} remains open. At the end of this section, we note how this paper and all of the results contribute to this question.

\subsection{Multiplicity and displays}

The first obstruction to computing \(\mu_T(m)\) is multiplicity.  Let
\(c_T(U)\) be the number of displays of \(T\) in \(U\), and define the
factorial collision moments
\[
C_k(T;m)=\sum_{U\in\T_m}\binom{c_T(U)}{k}.
\]
We define the $C_k(T;m)$ as collision moments. Then \(C_1(T;m)\) counts a host together with one marked display, whereas
\(\mu_T(m)\) counts only the underlying hosts.  Hostwise
inclusion--exclusion gives
\[
\mu_T(m)=\sum_{k\ge1}(-1)^{k+1}C_k(T;m),
\]
where the sum is finite for each \(m\).

For instance, take \(T\) to be the path \(1-2-3\) and \(U\) to be the path
\[
1-4-2-5-3.
\]
Contracting \(4\to1\) and \(5\to3\) gives one display.  Contracting
\(4\to2\) and \(5\to2\) gives another.  The marked count assigns weight two
to this host, while its contribution to the support count is one.

\subsection*{Survivor splits and block expansion}

Deleting an edge of \(T\) divides \([n]\) into the label sets of the two
resulting components.  These bipartitions form the edge-split system
\(\Sigma(T)\) which records the vertices of the two different split components of a tree after dividing it in every possible way.  Now fix a proposed survivor set
\[
R=\{r_1<\cdots<r_n\}\subseteq[m]
\]
in a host \(U\).  Every edge of \(U\) whose resulting two components when deleted both meet \(R\)
separates the survivors into two nonempty classes.  Replacing \(r_i\) by its
index \(i\) gives a split of \([n]\); the set of all such splits is denoted by
\(\Sigma_R(U)\).  Section~\ref{sec:displays} more formally defines these splits and proves that these splits
characterize containment.

\begin{theorem}[Split criterion]\label{thm:intro-split}
Let \(T\in\T_n\) and \(U\in\T_m\).  Then \(T\preceq U\) if and only if
there is an \(n\)-element survivor set \(R\subseteq[m]\) such that
\[
\Sigma(T)\subseteq\Sigma_R(U).
\]
\end{theorem}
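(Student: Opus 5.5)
We prove both directions separately. The forward direction is bookkeeping about what contraction does to edges. The reverse direction reconstructs a block partition from the splits and is the main step.

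\emph{Forward direction.} Suppose \(T\preceq U\). Fix a display: a partition of \([m]\) into connected blocks \(B_1,\dots,B_n\) with survivors \(r_i\in B_i\), where \(R=\{r_1<\dots<r_n\}\). Contracting every edge inside a block yields a tree isomorphic to \(T\) via \(r_i\mapsto i\).

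Each edge \(e=\{i,j\}\) of \(T\) comes from a unique edge \(f\) of \(U\) joining \(B_i\) and \(B_j\). Deleting \(f\) splits \(U\) into two components. Each component is a union of whole blocks, since blocks are connected and avoid \(f\). The blocks on each side are exactly the blocks indexed by the corresponding side of \(T-e\). Hence both sides of \(f\) meet \(R\), and the standardized survivor split of \(f\) equals the split of \(e\). This gives \(\Sigma(T)\subseteq\Sigma_R(U)\).

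\emph{Reverse direction.} Assume \(\Sigma(T)\subseteq\Sigma_R(U)\). For each edge \(e\) of \(T\), choose an edge \(f_e\) of \(U\) inducing the split of \(e\) on \(R\). The map \(e\mapsto f_e\) is injective, since distinct edges of \(T\) give distinct splits. Delete the \(n-1\) edges \(f_e\) from \(U\); this leaves exactly \(n\) components.

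\emph{Claim.} Each component contains exactly one survivor. Two survivors \(r_i,r_j\) are separated by \(f_e\) exactly when \(e\) lies on the \(i\)–\(j\) path in \(T\). Every pair is therefore separated by some chosen edge, so no component contains two survivors. There are \(n\) components and \(n\) survivors, so each component contains exactly one.

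Let \(B_i\) be the component containing \(r_i\). Contracting each \(B_i\) onto \(r_i\) gives a tree on \(R\) whose edges are exactly the images of the \(f_e\). We must check that the image of \(f_e\) joins \(r_i\) and \(r_j\) when \(e=\{i,j\}\). The split of \(f_e\) separates \(i\) from \(j\), and no other chosen edge separates \(i\) from \(j\), because the \(i\)–\(j\) path in \(T\) is the single edge \(e\). So in the quotient tree, \(r_i\) and \(r_j\) lie on opposite sides of \(f_e\) and on the same side of every other edge. In a tree, that forces them to be the endpoints of \(f_e\).

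Standardizing by \(r_i\mapsto i\) then gives \(T\). The directed contractions can be performed edge by edge inside each connected block, always toward \(r_i\). Hence \(T\preceq U\).

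The main obstacle is the reverse direction: showing that the components contain exactly one survivor each, and that the quotient edges land on the correct pairs. Both reduce to the path characterization of splits in a tree, applied once in \(T\) and once in the quotient.
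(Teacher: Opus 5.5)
Your proof is correct and follows the paper's argument essentially step for step. You choose one host edge per target split, delete these $n-1$ edges, use the path characterization of splits to get exactly one survivor per component, and contract; the only difference is that you check directly that the image of $f_e$ joins $r_i$ and $r_j$, whereas the paper shows the quotient has edge-split system $\Sigma(T)$ and invokes Lemma~\ref{lem:edge_splits}, which rests on the same distance-one criterion.
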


For a fixed display, let \(b_i\) be the size of the connected block of vertices which are contracted down to the
representing target vertex \(i\).  The labels outside the survivor set are
distributed among these blocks; Cayley's formula counts each internal block
tree; and every target edge selects one attachment vertex in each of its two
incident blocks.  This yields the following exact marked count.

\begin{theorem}[Marked-display enumeration]\label{thm:intro-marked}
Let \(T\in\T_n\), and write \(d_T(i)\) for the degree of \(i\) in \(T\).
For every \(m\ge n\),
\[
C_1(T;m)=\binom{m}{n}
\sum_{\substack{b_1+\cdots+b_n=m\\b_i\ge1}}
\binom{m-n}{b_1-1,\ldots,b_n-1}
\prod_{i=1}^n b_i^{\,b_i-2+d_T(i)}.
\]
\end{theorem}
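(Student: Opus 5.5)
We will prove the formula by a direct bijection. A pair \((U,D)\), where \(U\in\T_m\) and \(D\) is a display of \(T\) in \(U\), corresponds to an explicit tuple of data. Counting the tuples gives the right-hand side. By the definition in Section~\ref{sec:displays}, a display is a partition of \([m]\) into \(n\) connected blocks \(B_1,\dots,B_n\) together with a survivor \(r_i\in B_i\). The quotient tree, after standardization, must equal \(T\). Standardization orders the survivors increasingly, so the block containing \(r_i\) is the one that plays the role of target vertex \(i\) when \(r_1<\dots<r_n\).

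\textbf{Encoding.} Given \((U,D)\), we record four pieces of data:
\begin{enumerate}
\item[(a)] the survivor set \(R=\{r_1<\dots<r_n\}\), giving \(\binom mn\) choices;
\item[(b)] the assignment of each of the \(m-n\) non-survivor labels to a block \(i\), so that \(|B_i|=b_i\), giving \(\binom{m-n}{b_1-1,\ldots,b_n-1}\) choices for fixed \((b_i)\);
\item[(c)] the spanning tree \(U[B_i]\) on each block, which is a tree because the block is connected and \(U\) is a tree, giving \(b_i^{b_i-2}\) choices by Cayley's formula;
\item[(d)] for each edge \(ij\) of \(T\), the unique edge of \(U\) joining \(B_i\) and \(B_j\), given by its endpoints \(x\in B_i\) and \(y\in B_j\).
\end{enumerate}

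The edge in (d) is unique for the following reason. The quotient of \(U\) by a partition into connected blocks is a tree on \(n\) vertices. It is obtained by contracting \(\sum(b_i-1)=m-n\) edges, so it has exactly \(n-1\) edges and there are no multiple inter-block edges. The edge set of that quotient is exactly \(E(T)\) after relabeling by index, since the display realizes \(T\). Hence the inter-block edges of \(U\) are in bijection with \(E(T)\), and each contributes a choice of an endpoint in each incident block. Grouping these choices by block, block \(i\) receives one endpoint choice for each of its \(d_T(i)\) incident target edges, contributing \(b_i^{d_T(i)}\). Multiplying gives \(b_i^{b_i-2+d_T(i)}\). We use the convention \(1^{-1}=1\) for singleton blocks, where Cayley's formula gives \(b^{b-2}=1\).

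\textbf{Decoding.} Conversely, take any choice of the data (a)–(d). Let \(U\) be the union of the block trees and the \(n-1\) chosen connecting edges. It has \(\sum(b_i-1)+(n-1)=m-1\) edges. It is connected because \(T\) is connected and each block is connected, so \(U\) is a tree on \([m]\). Contracting each block onto its survivor is a valid sequence of directed contractions, since blocks are connected. After standardization the quotient is \(T\), because \(r_i\) has rank \(i\) and block adjacency is exactly \(E(T)\).

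The two maps are mutually inverse. Starting from data, the display's blocks, survivors, internal trees and connecting edges are read back exactly. Starting from \((U,D)\), the reconstruction returns \(U\), because every edge of \(U\) is either internal to a block or one of the inter-block edges. Summing over compositions \(b_1+\dots+b_n=m\) with \(b_i\ge1\) gives the stated formula for \(C_1(T;m)=\sum_U c_T(U)\).

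\textbf{Main obstacle.} The delicate point is matching the paper's precise notion of a display to the pair (blocks, survivors) with the ordering convention. Two things must be checked. First, the quotient edge structure is intrinsic to the partition and does not depend on the order in which the contractions are performed. Second, the survivor in each block is free: the directed contractions of a connected block can be arranged so that any chosen vertex survives, for instance by contracting the block's edges toward that vertex along a BFS order. We would verify both against the definition in Section~\ref{sec:displays}, so that distinct displays correspond exactly to distinct (partition, survivor) data and there is no overcounting from different contraction sequences.
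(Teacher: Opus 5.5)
Your proposal is correct and follows essentially the same bijection as the paper. You choose the survivor set, distribute the non-survivors among the ordered blocks, pick a Cayley tree on each block, and pick an endpoint in each incident block for every target edge; the inverse glues along the $n-1$ chosen edges. Your ``main obstacle'' is already settled by the paper's definition of a display as the pair $(R,q)$ (cf.\ Proposition~\ref{prop:display-equivalence}), so no overcounting from contraction sequences can arise, and your edge-count argument for the uniqueness of each crossing edge usefully makes explicit a step the paper leaves implicit.
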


Theorems~\ref{thm:intro-split} and \ref{thm:intro-marked} are proved as
Theorems~\ref{thm:split_system} and \ref{thm:marked_display}.  The second
formula determines \(C_1(T;m)\), but not \(\mu_T(m)\); as noted one needs to account for multiple displays.

\subsection*{Bounded collision cores}

To analyze collisions, we consider several displays in the same host
simultaneously.  Although the host may have arbitrarily many vertices, only
a bounded part of it is visible to a fixed number of display maps.  Indeed,
suppose that $\pi_1,\ldots,\pi_k$ are different displays as defined above of an $n$-vertex target in a
common host.  If the endpoints of a host edge have the same image under every
$\pi_a$, i.e. that edge is always contracted, then contracting that edge leaves all $k$ quotient maps unchanged.
Contract all such edges.  Every edge that remains crosses a contraction-block
boundary for at least one display.  Since each display has exactly $n-1$
crossing edges, since theres $n$ connected blocks, the reduced tree has at most $k(n-1)$ edges, and hence at most
$k(n-1)+1$ vertices.  Thus the simultaneous quotient structure of any fixed
number of displays is encoded by a tree of bounded order, independently of
the size of the host. Additionaly, one way to count hosts is to use these bounded displays to expand out from the original target tree by adding new vertices. This paper sometimes refers to this as \emph{expansion counts or cores}.

The next theorem combines the two main conclusions of the collision analysis.
The first shows that the simultaneous behavior of a fixed number of displays
can be represented on a tree of bounded size as explained above.  The second shows that, after
the survivor choices have also been recorded, these bounded configurations
give an exact finite enumeration of the collision moments $C_k(T;m)$.  The precise
indexing objects and the resulting formula are developed in
Sections~\ref{sec:cores} and~\ref{sec:decorated-cores}.

\begin{theorem}[Bounded reduction and exact collision enumeration]
\label{thm:intro-cores}
Fix a target tree $T\in\mathcal T_n$ and an integer $k\geq 1$.  Let
$\delta_1,\ldots,\delta_k$ be pairwise distinct displays of $T$ in a common
host $U$.  Contract every host edge whose endpoints lie in the same
contraction block in each of the $k$ displays.  The resulting quotient tree
has at most
\[
k(n-1)+1
\]
vertices.

For fixed $T$ and $k$, only finitely many quotient configurations can arise
in this way once the locations, coincidences, and required order relations of
the survivor labels are recorded.  Moreover, for every $m$, the number
\[
k!\,C_k(T;m)
\]
of hosts on $[m]$ equipped with an ordered $k$-tuple of distinct displays is
given by a finite sum of explicit expansion counts, one for each such
configuration.
\end{theorem}
The exact form of the finite sum is as follows.  Let
$\mathfrak C_k(T)$ denote the finite set of isomorphism classes of the bounded
configurations described in the theorem, including the data of the survivor
labels.  For a representative $\Omega$, let
$L_m^{\mathrm{rig}}(\Omega)$ be the number of expansions to the label set
$[m]$ after the bounded quotient has been identified with that representative,
and let $\operatorname{Aut}(\Omega)$ be its group of symmetries preserving all
display and survivor data.  Then
\[
k!\,C_k(T;m)
=
\sum_{[\Omega]\in\mathfrak C_k(T)}
\frac{L_m^{\mathrm{rig}}(\Omega)}
     {|\operatorname{Aut}(\Omega)|}.
\]
The numerator counts expansions with a chosen identification of their bounded
quotient with $\Omega$.  Dividing by
$|\operatorname{Aut}(\Omega)|$ removes the multiplicity introduced by that
choice.  The configurations $\Omega$, their automorphisms, and the lift counts
$L_m^{\mathrm{rig}}(\Omega)$ are defined formally in
Section~\ref{sec:decorated-cores}.

The significance of the theorem is that the number of possible configurations
depends only on $T$ and $k$, not on the host order $m$.  The dependence on
$m$ enters only when the vertices of a bounded configuration are expanded
into connected labeled subtrees and the survivor labels and attachment points
are chosen.  Theorem~\ref{thm:decorated-lifts} gives the resulting exact
formula.
\subsection*{Two consequences}

The first consequence concerns large hosts.  A uniform Cayley tree has a
linear expected number of exact standardized copies of a fixed target.
Moon's forest-extension formula shows that copies on disjoint vertex sets
are independent in the sense needed for a sparse dependency graph.  Suen's
inequality then makes simultaneous avoidance exponentially unlikely.

\begin{theorem}[Exponential containment]
For every fixed labeled tree $T\in\T_n$, there exists $c_T>0$ such that
\[
0\le 1-\frac{\mu_T(m)}{m^{m-2}}=O_T(e^{-c_Tm}).
\]
\end{theorem}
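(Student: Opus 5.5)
The plan follows the outline given before the statement. We fix $T\in\T_n$ and look for a large family of vertex sets in a uniform random tree $U\in\T_m$ on which an exact standardized copy of $T$ can appear. The simplest sufficient event is that $U$ contains an \emph{induced} copy of $T$ on some $n$-set $S\subseteq[m]$. This means the edges of $U$ inside $S$ form a tree whose standardization is $T$.

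Such a copy already gives $T\preceq U$. Indeed, deleting the $n-1$ internal edges of $U[S]$ splits $U$ into $n$ components, each containing exactly one vertex of $S$. Contracting each component onto its vertex of $S$ yields $T$. This is also visible through the split criterion of Theorem~\ref{thm:intro-split} with $R=S$. Hence
\[
1-\frac{\mu_T(m)}{m^{m-2}}\le \Pr[\text{no } S \text{ carries a copy}],
\]
and the lower bound $0\le\cdot$ is trivial.

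To get independence cheaply, we restrict to a disjoint family of candidate sets. Partition $[m]$ into $N=\lfloor m/n\rfloor$ blocks $S_1,\dots,S_N$ of $n$ consecutive labels. Consecutive labels make each block order-isomorphic to $[n]$, and any $n$-set would do. Let $A_j$ be the event that the edges of $U$ inside $S_j$ are exactly the $n-1$ edges of the copy of $T$ on $S_j$.

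Moon's forest-extension formula says the number of trees on $[m]$ containing a prescribed spanning forest with component sizes $a_1,\dots,a_r$ is $a_1\cdots a_r\, m^{r-2}$. Applying it with inclusion–exclusion over extra internal edges, or alternatively just using containment of the forest, gives $\Pr[A_j]\asymp c\,m^{-(n-1)}$. That is too small: the expected number of successes is only about $m^{2-n}$.

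So disjoint fixed blocks are the wrong family. We must use all $\binom{m}{n}$ sets, and then the expected number of copies is $\Theta(m)$, linear as the text claims. This forces a dependency-graph argument, which is why Suen's inequality enters. We take as index set all $n$-sets $S$ together with labelled embeddings. The indicator $I_S$ is that $U[S]$ is exactly the standardized copy of $T$.

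The computation has three parts.

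1. By Moon's formula with one nontrivial component of size $n$ and singletons elsewhere, $\Pr[T\text{-edges on }S\text{ present}]=n\,m^{m-n-1}/m^{m-2}=n\,m^{1-n}$. Forbidding the remaining $\binom{n}{2}-(n-1)$ pairs changes this only by a factor $1-O(1/m)$. Hence $\lambda=\sum_S\Pr[I_S]=\Theta(m)$.

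2. Declare $S\sim S'$ when $S\cap S'\neq\emptyset$. For disjoint $S,S'$ (and indeed for disjoint families), Moon's formula factorizes: the presence probability is $n^2m^{m-2n}/m^{m-2}=(n\,m^{1-n})^2\cdot m^{0}$, up to $1+O(1/m)$. This is the near-independence needed for Suen's inequality. Suen requires only a dependency graph in the weak sense of the Janson/Suen framework, for which the standard way to use it on random trees is via the product structure given by Moon. I expect justifying this hypothesis to be the main obstacle. I would first try Janson's inequality through a positive-correlation structure. Failing that, I would use the version of Suen's inequality for a superposition-type dependency graph, verifying mutual independence of $\{I_S\}$ from $\{I_{S'}: S'\cap S=\emptyset\}$ only asymptotically and absorbing the error.

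3. Compute $\Delta=\sum_{S\sim S'}\mathbb E[I_SI_{S'}]$. When $|S\cap S'|=t\ge1$, the union forms a forest on $2n-t$ vertices with at least $2n-2-(t-1)$ edges. Moon gives probability $O(m^{-(2n-t-1)})$, and the number of pairs is $O(m^{2n-t})$, so $\Delta=O(m)$. The maximal degree term $\delta=\max_S\sum_{S'\sim S}\Pr[I_{S'}]=O(1)$.

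Suen's inequality then gives $\Pr[\sum I_S=0]\le\exp(-\lambda+\Delta e^{2\delta})$. This is useless if $\Delta$ is comparable to $\lambda$. So we should thin the family: restrict to a random or fixed sub-family of density $\varepsilon$, keeping $\lambda'=\varepsilon\lambda$ while $\Delta'=\varepsilon^2\Delta$. Choosing $\varepsilon$ small makes $\lambda'-\Delta'e^{2\delta'}\ge c_Tm$, giving $O_T(e^{-c_Tm})$.
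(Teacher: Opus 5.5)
Your outline matches the paper's: exact standardized copies on all $\binom mn$ sets, $\lambda=\Theta(m)$, the intersection graph, $\Delta=O(m)$, $\delta=O(1)$, then Suen. The gap is that the step everything rests on is never proved. You do not show that joining $S$ and $S'$ when $S\cap S'\neq\varnothing$ gives a genuine dependency graph. That requires $I_S$ to be independent of the whole family $\{I_R: R\cap S=\varnothing\}$ \emph{jointly}, including sets $R$ that overlap one another and events where some $I_R=0$. Pairwise factorization ``up to $1+O(1/m)$'' is not a hypothesis Suen's inequality accepts. Your fallbacks do not close the gap either. Janson's inequality needs increasing events on a product space of independent coordinates, and a uniform Cayley tree is not such a space. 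Absorbing an asymptotic independence error would require uniform control over the $\sigma$-algebra generated by $\Theta(m^n)$ indicators, which you do not provide.

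The missing observation is that nothing here is approximate. Because $U$ is a tree, $T_S\subseteq U$ already forces $U[S]=T_S$, since any further edge inside $S$ closes a cycle. So $\mathbb P(I_S=1)=n\,m^{1-n}$ exactly; your ``factor $1-O(1/m)$'' is exactly $1$, and that misperception is what made the independence look only approximate. Now take any $R_1,\dots,R_t$ disjoint from $S$ and any $J\subseteq[t]$. The event $\{I_{R_j}=1\ (j\in J)\}$ is either empty, when $\bigcup_{j\in J}T_{R_j}$ contains a cycle, in which case its intersection with $\{I_S=1\}$ is empty too. Otherwise it is the event that $U$ contains the forest $F_J=\bigcup_{j\in J}T_{R_j}$, in which every vertex of $S$ is a singleton component. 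Adjoining $T_S$ merges those $n$ singletons into one component of size $n$. Moon's count $\prod q_i\,m^{r-2}$ is therefore multiplied by exactly $n\,m^{-(n-1)}=\mathbb P(I_S=1)$. This gives $\mathbb P(I_S=1,\ I_{R_j}=1\ (j\in J))=\mathbb P(I_S=1)\,\mathbb P(I_{R_j}=1\ (j\in J))$ for every $J$, and inclusion--exclusion (or a $\pi$-system argument) extends it to the generated $\sigma$-algebra; this is Lemma~\ref{lem:dependency}.

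With that in place, the rest of your argument goes through. For the thinning, fix a subfamily with $\lambda'\ge\varepsilon\lambda/2$ and $\Delta'\le 2\varepsilon^2\Delta$, which exists by averaging; do not plug expectations into the exponential. A restricted dependency graph is still a dependency graph and $\delta'\le\delta$, so $-\lambda'+\Delta'e^{2\delta'}\le -c_Tm$ for small fixed $\varepsilon$. The paper skips thinning by using the form $\mathbb P(X=0)\le\exp\bigl(-\min\{\lambda^2/(8\Delta),\,\lambda/2,\,\lambda/(6\delta)\}\bigr)$. That bound is exponentially small directly from $\lambda\asymp m$, $\Delta=O(m)$, $\delta=O(1)$.
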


Thus every fixed target has containment density tending to one exponentially
fast.  Any rigidity in Conjecture~\ref{conj:RW} must therefore be carried by
the avoidance counts or by finer information about display counts, not
by the limiting density.

The second consequence treats stars.  Let \(S_n\) be the star on \([n+1]\)
whose center is \(n+1\).  We call a display \emph{top-centered} when the
largest host label that survives is the center of the star, trivially, for $S_n$, since its center is maximal, its displays of target trees will always be top-centered.  In this setting containment is
controlled by the number of host leaves, with one boundary case when that
number is exactly \(n\).

\begin{theorem}[Top-centered star avoidance]
Let $S_n$ be the star on $[n+1]$ with center $n+1$, and let
$A_n^{\mathrm{top}}(k)$ be the number of trees on $[k]$ not contained by $S_n$.  If $\tau(k,L)$ denotes the number of labeled
trees on $[k]$ with exactly $L$ leaves, then
\[
A_n^{\mathrm{top}}(k)=\sum_{L=0}^{n-1}\tau(k,L)+\frac{n}{k}\tau(k,n).
\]
Equivalently, with
\[
T_L(x)=\sum_{k\ge2}\tau(k,L)\frac{x^k}{k!},
\qquad
A_n^{\mathrm{top}}(x)=\sum_{k\ge2}A_n^{\mathrm{top}}(k)\frac{x^k}{k!},
\]
one has
\[
A_n^{\mathrm{top}}(x)=\sum_{L=0}^{n-1}T_L(x)
+n\int_0^x\frac{T_n(t)}{t}\,dt.
\]
Moreover $A_n^{\mathrm{top}}(x)$ is rational up to one logarithmic term, and
for fixed $n\ge3$,
\[
A_n^{\mathrm{top}}(k)\sim
\frac{k!}{2^{n-2}(n-2)!(n-1)!}\,k^{2n-5}.
\]
If $U_k$ is a uniformly random tree on $[k]$ and $n=n(k)$, then being contained by $S_n$ has threshold $n\sim k/e$: the containment probability tends to
$1$ when $\limsup n(k)/k<1/e$ and to $0$ when
$\liminf n(k)/k>1/e$.
\end{theorem}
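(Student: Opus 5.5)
The plan is to reduce containment to a leaf-counting condition, and then read off the enumeration, the generating function, the asymptotics and the threshold from the classical leaf distribution of Cayley trees.

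\textbf{Step 1: characterize containment by leaves.} I read \emph{top-centered} as requiring the survivor of the center block to be the host label \(k\) itself. Suppose a display of \(S_n\) in \(U\in\T_k\) exists. Each of the \(n\) leaf blocks is joined to the rest of \(U\) by exactly one edge. So each leaf block is a pendant subtree, and it contains at least one leaf of \(U\). The blocks are disjoint, so \(U\) has at least \(n\) leaves. Moreover \(k\) lies in the center block, so \(k\) is not in any leaf block.

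Conversely, suppose \(U\) has \(L\ge n\) leaves and at least \(n\) of them differ from \(k\). Take those \(n\) leaves as singleton leaf blocks and let everything else form the center block, with survivor \(k\). The center block is connected because deleting leaves preserves connectivity. The required order relations hold automatically, since \(k\) is the largest label. Hence \(S_n\preceq U\) if and only if either \(L\ge n+1\), or \(L=n\) and \(k\) is not a leaf. (When \(L=n\) and \(k\) is a leaf, the argument above shows that some leaf block must contain \(k\).)

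\textbf{Step 2: count avoiders and pass to generating functions.} Step 1 gives
\[
A_n^{\mathrm{top}}(k)=\sum_{L<n}\tau(k,L)+\#\{U:\ L(U)=n,\ k\text{ a leaf}\}.
\]
By symmetry of labels, the leaf-incidences \((U,v)\) with \(L(U)=n\) and \(v\) a leaf number \(n\,\tau(k,n)\). These are equidistributed over \(v\in[k]\), so the last term equals \(\frac nk\tau(k,n)\). In exponential generating functions, multiplying the coefficient by \(1/k\) is the operator \(F\mapsto\int_0^x F(t)\,t^{-1}\,dt\), which gives the stated identity.

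For rationality I will use the classical formula
\[
\tau(k,L)=\frac{k!}{L!}\stir{k-2}{k-L}.
\]
Then \(T_L(x)\) is obtained from the EGF of Stirling numbers with fixed co-index \(k-2-(k-L)=L-2\). That EGF is a rational function of \(x\) after a standard substitution. The finitely many terms \(L<n\) stay rational. The integral of \(T_n(t)/t\) produces at most one logarithm, coming from a simple-pole term.

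\textbf{Step 3: asymptotics and threshold.} For fixed \(n\), the sum is dominated by the largest co-index present. I need to compare the \(L=n-1\) term with the \(\frac nk\tau(k,n)\) term, using \(\stir{N}{N-j}\sim N^{2j}/(2^j j!)\). The \(L=n\) term carries one extra factor of order \(k^2/k=k\) relative to \(L=n-1\), so it should give the leading order \(k!\,k^{2n-5}\) with the constant stated in the theorem. Checking the exact constant is the main bookkeeping obstacle, and I would verify it carefully against small cases.

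For the threshold, the number of leaves of a uniform Cayley tree is concentrated at \(k/e\), with variance of order \(k\) (Rényi). If \(\limsup n/k<1/e\), then \(\Pr(L\le n)\to0\), so containment has probability tending to \(1\). If \(\liminf n/k>1/e\), then \(\Pr(L\ge n)\to0\), so containment has probability tending to \(0\). A Chebyshev bound on \(L\) suffices in both directions.
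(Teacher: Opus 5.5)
Your proposal is correct and follows essentially the same route as the paper. The shared steps are: the leaf criterion; double counting for the boundary class $L=n$ with $k$ a leaf; the $1/k\leftrightarrow\int_0^x(\cdot)\,dt/t$ correspondence; the Stirling-diagonal form of $\tau(k,L)$; dominance of the $\frac nk\tau(k,n)$ term; and Chebyshev concentration of the leaf count, which the paper derives from Pr\"ufer indicators rather than citing R\'enyi.

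Two loose ends close immediately. First, the constant needs no small-case check: $\frac nk\cdot\frac{k!}{n!}\cdot\frac{k^{2n-4}}{2^{n-2}(n-2)!}$ is already $\frac{k!\,k^{2n-5}}{2^{n-2}(n-2)!(n-1)!}$. Second, your single-logarithm claim requires the rational function $T_n(t)/t$ to have its only pole at $t=1$, and you should state this. It follows from $\stir{N}{N-r}$ being a polynomial in $N$. The paper uses that same polynomiality to split the coefficient $P_{n-2}(k-2)/k$ into a polynomial plus $\gamma_n/k$, instead of using partial fractions.
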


Thus $k/e$ is the threshold scale: for every fixed $\varepsilon>0$, a
top-centered star with at most $(1/e-\varepsilon)k$ leaves contains another tree with
probability tending to one, whereas one with at least
$(1/e+\varepsilon)k$ leaves is avoided with probability tending to one.
\subsection*{Relation to Conjecture~\ref{conj:RW} and Motivation}

Conjecture~\ref{conj:RW} is an inverse problem: it asks how much of a labeled target can be recovered from the set of hosts in which it occurs.  The results above try to create invariants which help recover this information. Moreover, the explicit, asymptotic, and probabilistic results look at certain families of trees, specifically stars, to see how containment works in certain cases to shed more light on the problem. The split criterion first replaces arbitrary contraction sequences by an intrinsic condition on the host.  Counting one marked display then gives an explicit formula, but that formula depends only on the degree multiset of the target.  Consequently, the first marked count cannot distinguish all nonisomorphic trees and cannot by itself prove the conjecture.  Any additional distinguishing information must come from the way different displays overlap inside the same host.  The bounded-reduction theorem isolates precisely this multiplicity data: for each fixed number of displays, every possible overlap is represented by a configuration whose size is bounded independently of the host order.  The collision expansion formula therefore provides a finite framework in which the information omitted by the one-display count can be studied.

The asymptotic results reveal a second obstruction.  Every fixed target is contained in a proportion tending exponentially quickly to one of all sufficiently large labeled trees.  Thus the limiting containment density contains no information capable of distinguishing two targets; the information relevant to Conjecture~\ref{conj:RW} must lie in the much smaller avoidance sequence, or equivalently in the lower-order differences between the support counts assuming one can not differentiate the order of exponential growth between the trees.  The top-centered star problem serves as a controlled test case for this perspective.  In that restricted family, the relevant obstruction can be expressed through the number of leaves, allowing the avoidance sequence to be determined exactly and then analyzed both for a fixed star and for a star whose size grows with the host.  These star results do not resolve Conjecture~\ref{conj:RW} of course, but they illustrate the kind of structural statistic that the rare avoidance set can retain after the ordinary containment density has become asymptotically trivial.

The paper follows this progression.  Sections~\ref{sec:displays} and~\ref{sec:marked}
develop the display formulation, the split criterion, and the one-display
enumeration.  Sections~\ref{sec:cores} and~\ref{sec:decorated-cores} analyze
overlapping displays through bounded configurations and exact collision
counts.  Section~\ref{sec:asymptotic} establishes exponential containment
for every fixed target.  The remaining sections study the top-centered
star family, beginning with exact avoidance and continuing through its
generating functions, fixed-star asymptotics, and growing-star threshold.
\section{Displays and split systems}\label{sec:displays}
For an ordered set \(R=\{r_1<\cdots<r_n\}\), write
\[
\stnd_R:R\longrightarrow[n],
\qquad
\stnd_R(r_i)=i,
\]
for its increasing $\emph{standardization}$.
\begin{definition}[Directed contraction]\label{def:directed-contraction}
Let \(U\) be a tree on n vertices labeled on the set $[n] = \left\{1,2,3,...,n\right\}$ and let \(a,b\in L\), $(a,b)\in E(T)$.  A \emph{directed contraction} \(a\to b\) deletes \(a\) and
the edge \(ab\), replaces every other edge $(a,x)$ which is connected to a by $(b,x)$, and retains the
label \(b\). Then take the labels and standardize them as defined above to being labeled by $[n-1]$.
\end{definition}

\begin{definition}[Display]\label{def:display}
Let \(T\in\T_n\) and \(U\in\T_m\).  A \emph{display} of \(T\) in \(U\) is a
pair \(\delta=(R,q)\) consisting of an \(n\)-element set
\[
R=\{r_1<\cdots<r_n\}\subseteq[m]
\]
and a retraction \(q:[m]\to R\), where $q^{-1}(r_i)$ is called the fiber of $r_i$, with the following properties:
\begin{enumerate}
\item every fiber \(q^{-1}(r_i)\) induces a connected subtree of \(U\);
\item after each fiber is contracted to one vertex, the quotient tree,
      relabeled by \(r_i\mapsto i\), is \(T\).
\end{enumerate}
The elements of \(R\) are the \emph{survivors}, \(q\) is the
\emph{destination map}, and its fibers are the \emph{contraction blocks}, sometimes referred to as connected blocks or quotient blocks.
The associated \emph{display map} is the surjection
\[
\pi=\stnd_R\circ q:[m]\twoheadrightarrow[n].
\]
We write \(T\preceq U\) when \(U\) admits at least one display of \(T\), i.e. the display map gets you from $U$ to $T$. We also refer to vertices of $T$ in this setting as \emph{target vertices}.
\end{definition}

The destination map remembers actual host labels, whereas the display map
remembers only their standardized labels.  A connected block does not
by itself determine its survivor.

\begin{proposition}[Displays and contraction sequences]
\label{prop:display-equivalence}
A pair \((R,q)\) is a display of \(T\) in \(U\) if and only if there is a
sequence of directed contractions that deletes \([m]\setminus R\), sends each
vertex \(v\) to \(q(v)\), and leaves a tree whose increasing standardization
is \(T\).  Consequently, two contraction sequences determine the same
display precisely when they have the same survivor set and destination map.
\end{proposition}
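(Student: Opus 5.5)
We prove the two directions separately, treating a contraction sequence as a record of where each deleted label goes. Throughout, it is convenient to postpone standardization: a directed contraction \(a\to b\) acts on the actual host labels, and standardizing at each step or only once at the end gives the same final tree up to the increasing relabeling. The reason is that the composite of the increasing bijections \(\stnd\) is again the increasing standardization of the final survivor set. So we work with unstandardized labels and standardize once at the end.

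For the forward direction, let \((R,q)\) be a display. We contract each fiber \(q^{-1}(r_i)\) onto \(r_i\) separately, since contractions inside distinct fibers commute. Because \(q^{-1}(r_i)\) induces a subtree containing \(r_i\), we root that subtree at \(r_i\) and repeatedly contract a leaf \(a\ne r_i\) of the current subtree into its neighbour \(b\) in that subtree (\(a\to b\)). After each step the fiber, with \(a\) removed, is still connected and contains \(r_i\). Tracking destinations, \(a\) is eventually sent along its path to \(r_i\), so every vertex \(v\) ends at \(q(v)\) and exactly \([m]\setminus R\) is deleted. The resulting tree is the quotient by the fibers: an edge \(r_ir_j\) survives exactly when some host edge joins the two fibers. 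By condition (2) of Definition~\ref{def:display}, its standardization is \(T\).

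For the converse, take a contraction sequence deleting \([m]\setminus R\), and define \(q(v)\) to be the survivor whose vertex eventually absorbs \(v\). Following compositions of the steps \(a\to b\) makes this a retraction onto \(R\). We then show by induction on the number of contractions that each current vertex \(x\) represents a set \(F_x\) of original vertices that induces a connected subtree of \(U\), and that the current tree is the quotient of \(U\) by the partition \(\{F_x\}\). For the inductive step, a contraction \(a\to b\) along an edge \(ab\) of the current quotient merges \(F_a\) and \(F_b\). Since \(ab\) is a quotient edge, some host edge joins \(F_a\) and \(F_b\), so \(F_a\cup F_b\) is connected. Rewiring the edges \((a,x)\) to \((b,x)\) is exactly the quotient adjacency. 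At the end, the fibers of \(q\) are the sets \(F_r\), which are connected, and the final tree standardizes to \(T\). Hence \((R,q)\) is a display.

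The \emph{consequently} clause follows because the correspondence sends a sequence to \((R,q)\), and the induced display depends only on the survivor set and the destination map. The main care point is the inductive invariant that quotient edges correspond to host edges between blocks, together with the commutation of standardization with contraction. The rest is bookkeeping.
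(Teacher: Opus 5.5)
Your proposal is correct and follows essentially the paper's argument: contract each fiber leaf by leaf toward its survivor for display $\Rightarrow$ sequence, and read off the absorbed blocks of a given contraction sequence for sequence $\Rightarrow$ display. Your inductive invariant (each current vertex carries a connected block of $U$, and the current tree is the quotient by these blocks) and your remark that the stepwise standardizations compose to the final increasing standardization make explicit what the paper's short proof leaves implicit.
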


\begin{proof}
Every vertex merged into a survivor is joined to it through contracted
edges, so a contraction sequence produces the contraction blocks.  Contracting
those blocks gives its final quotient, proving the forward implication.

Conversely, root the subtree induced by \(q^{-1}(r_i)\) at \(r_i\).  Repeatedly
contract a leaf of this rooted subtree toward its parent, working from the
leaves toward the root.  Doing this independently in every fiber deletes
exactly \([m]\setminus R\) and sends each vertex to its prescribed
destination.  The quotient condition in Definition~\ref{def:display} then
gives \(T\) after standardization.
\end{proof}

\begin{proposition}\label{prop:partial-order}
Directed-contraction containment is a partial order on
\(\bigsqcup_{n\ge1}\T_n\).
\end{proposition}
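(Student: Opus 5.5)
We will check the three axioms of a partial order (reflexivity, transitivity, antisymmetry) directly from Definition~\ref{def:display}, using Proposition~\ref{prop:display-equivalence} to pass freely between displays and contraction sequences.

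\emph{Reflexivity.} For \(T\in\T_n\), take \(R=[n]\) and \(q=\mathrm{id}\). Every fiber is a single vertex, the quotient is \(T\) itself, and \(\stnd_{[n]}\) is the identity. So \(T\preceq T\).

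\emph{Transitivity.} Suppose \(T\preceq U\preceq W\), with \(T\in\T_n\), \(U\in\T_m\), \(W\in\T_p\). By Proposition~\ref{prop:display-equivalence}:
\begin{itemize}
\item there is a contraction sequence \(\sigma_1\) from \(W\) to a tree whose standardization is \(U\);
\item there is a contraction sequence \(\sigma_2\) from \(U\) to a tree whose standardization is \(T\).
\end{itemize}
Standardization is an order-preserving relabeling. Directed contraction commutes with any order-preserving injective relabeling, since it only uses adjacency and which endpoint is retained. Hence \(\sigma_2\) can be transported along the inverse standardization \(\stnd_{R_1}^{-1}:[m]\to R_1\subseteq[p]\), where \(R_1\) is the survivor set of \(\sigma_1\). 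This gives a sequence \(\sigma_2'\) acting on the unstandardized quotient of \(W\).

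The concatenation \(\sigma_1\sigma_2'\) is a contraction sequence in \(W\). Its survivor set is \(R=\stnd_{R_1}^{-1}(R_2)\), and its final tree standardizes to \(T\), because composing increasing standardizations is increasing. Applying Proposition~\ref{prop:display-equivalence} again gives a display \((R,q)\) of \(T\) in \(W\), where \(q\) is the composite destination map \(\stnd_{R_1}^{-1}\circ q_2\circ\stnd_{R_1}\circ q_1\). Its fibers are unions of connected fibers of \(q_1\) over a connected fiber of \(q_2\), so they are connected. The main point needing care is this compatibility of standardization with contraction; it is routine but should be stated explicitly.

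\emph{Antisymmetry.} Each directed contraction lowers the vertex count by exactly one, so \(T\preceq U\) forces \(|T|\le|U|\). If \(T\preceq U\) and \(U\preceq T\), then \(n=m\). A display with \(|R|=n=m\) has \(R=[m]\), \(q=\mathrm{id}\), and all fibers singletons. The quotient is then \(U\) itself with standardization the identity, so \(T=U\).
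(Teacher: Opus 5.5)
Your proof is correct and follows the same route as the paper: the identity display gives reflexivity, composing the two contractions gives transitivity, and the vertex-count comparison forcing the identity display gives antisymmetry. You also make explicit two points that the paper's one-line transitivity argument leaves implicit, namely the compatibility of standardization with contraction and the connectedness of the composite fibers.
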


\begin{proof}
Reflexivity is given by not contracting any edges. If $T \preceq U$ and $S\preceq T$, then contract U down to T and then contract it down to $S$ and therefore $S \preceq T$. Finally, \(T\preceq U\) implies \(|V(T)|\le |V(U)|\).  If both
\(T\preceq U\) and \(U\preceq T\), the orders are equal, so neither display
contracts an edge and \(T=U\).
\end{proof}

We give the following example and figure to further illustrate the notion of display.

\begin{example}\label{ex:pathdisplay}
Let $T$ be the labeled path $1-2-3$, and let $U$ be the labeled path
\[
1-4-2-5-3.
\]
Choose the survivor set \(R=\{1,2,3\}\).  The destination map with fibers
\(\{1,4\}\), \(\{2\}\), and \(\{3,5\}\) gives one display; it can be
realized by the contractions \(4\to1\) and \(5\to3\).  The same host has a
second display with fibers \(\{1\}\), \(\{2,4,5\}\), and \(\{3\}\) and $5\to 4$, $4\to2$.
\end{example}

\begin{figure}[H]
\centering
\begin{tikzpicture}[scale=.95, every node/.style={circle,draw,inner sep=1.3pt,minimum size=18pt}]
\node (a) at (0,0) {$1$};
\node (b) at (1.2,0) {$4$};
\node (c) at (2.4,0) {$2$};
\node (d) at (3.6,0) {$5$};
\node (e) at (4.8,0) {$3$};
\draw (a)--(b)--(c)--(d)--(e);
\draw[dashed,rounded corners] (-.35,-.45) rectangle (1.55,.45);
\draw[dashed,rounded corners] (2.05,-.45) rectangle (2.75,.45);
\draw[dashed,rounded corners] (3.25,-.45) rectangle (5.15,.45);
\node[draw=none,rectangle] at (2.4,-1.05) {$U$ displays $1-2-3$ by two directed contractions.};
\end{tikzpicture}
\caption{A contraction display.  Dashed boxes indicate the connected contraction blocks determined by the destination map.}
\label{fig:display}
\end{figure}

\begin{definition}[Edge and survivor split systems]\label{def:split-systems}
For an edge \(e\) of \(T\), deleting \(e\) produces disconnected components with label
sets \(A_e\) and \(A_e^c=[n]\setminus A_e\). If $a \to b$ is the contraction then we let the vertices on the side of $a$ in the tree be the ones in $A_e$ and those on the vertices $b$ side are in $A_e^c$. The unordered bipartition
\(A_e\mid A_e^c\) is the \emph{edge split} of \(e\).  The
\emph{edge-split system} of \(T\) is
\[
\Sigma(T)=\{A_e\mid A_e^c:e\in E(T)\}.
\]

Now let \(U\in\T_m\) and let
\(R=\{r_1<\cdots<r_n\}\subseteq[m]\).  If deleting an edge \(g\) of \(U\)
leaves survivors on both sides, let \(I_g\subset[n]\) be the indices of the
survivors in $R$ in one component.  The resulting unordered proper split is
\(I_g\mid I_g^c\).  The \emph{survivor split system} is
\[
\Sigma_R(U)
=
\{I_g\mid I_g^c:g\in E(U),\ I_g\ne\varnothing,\ I_g^c\ne\varnothing\}.
\]
It is a set, so repeated splits induced by different host edges are recorded
once.  Singleton-versus-complement splits are retained; only edges with all
survivors on one side are omitted.
\end{definition}

\begin{lemma}[Edge splits determine a labeled tree]\label{lem:edge_splits}
The edge split system $\Sigma(T)$ determines $T$.
\end{lemma}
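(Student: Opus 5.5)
Our plan is to reconstruct the edge set of \(T\) explicitly from \(\Sigma(T)\): we show that a pair \(\{u,v\}\subseteq[n]\) is an edge of \(T\) exactly when some split in \(\Sigma(T)\) has the property below, which mentions only the splits. Two labeled trees with the same split system will then have the same edge set on the same vertex set \([n]\), and so coincide. The key observation is that each edge \(e=uv\) of \(T\) is the unique edge whose split separates \(u\) from \(v\) among edges "between" them. Concretely, \(u\) and \(v\) are separated by \(A_f\mid A_f^c\) if and only if \(f\) lies on the unique \(u\)–\(v\) path in \(T\). Hence the number of splits in \(\Sigma(T)\) separating \(u\) and \(v\) equals \(\operatorname{dist}_T(u,v)\).

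We carry this out in three steps. First, we verify that distinct edges give distinct splits. Deleting \(f\) gives components \(A_f\) and \(A_f^c\), and the map \(f\mapsto A_f\mid A_f^c\) is injective: if \(f\ne f'\), some vertex lies on the \(f'\)-side away from \(f\) in a way that separates the two bipartitions. More simply, the path between an endpoint of \(f\) and an endpoint of \(f'\) crosses exactly one of them. This ensures that counting splits in the set \(\Sigma(T)\) matches counting edges. Second, we prove the path characterization: for \(u\ne v\), the edge \(f\) separates \(u\) from \(v\) if and only if \(f\) lies on the \(u\)–\(v\) path. This follows by uniqueness of paths in trees. Third, we conclude that
\[
uv\in E(T)\iff \#\{\sigma\in\Sigma(T):\sigma\text{ separates }u,v\}=1,
\]
which recovers \(E(T)\) from \(\Sigma(T)\) alone.

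The only step needing care is the injectivity in the first step. Without it, \(\Sigma(T)\) could record fewer than \(n-1\) splits, and the count could collapse. We handle it by a direct argument. Suppose \(f=ab\) and \(f'\) are distinct edges, and orient things so that \(f'\) is not on the \(a\)-side path issue; then one endpoint pair of \(f\) is separated by \(f\) but not by \(f'\), since the \(a\)–\(b\) path is just \(f\). So \(\{a,b\}\) is split by \(A_f\mid A_f^c\) but not by \(A_{f'}\mid A_{f'}^c\), and the two splits differ. This also shows that the third step's count is unaffected by \(\Sigma(T)\) being a set rather than a multiset. The remaining steps are routine.
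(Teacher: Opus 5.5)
Your proposal is correct and takes the paper's approach: both recover \(E(T)\) by declaring \(uv\) an edge exactly when a single split of \(\Sigma(T)\) separates \(u\) and \(v\), using that an edge separates \(u,v\) if and only if it lies on the \(u\)--\(v\) path. Your explicit injectivity step, via the pair \(\{a,b\}\) that only \(f=ab\) separates, is correct and supplies what the paper's converse direction uses tacitly (the paper proves it only later, inside Theorem~\ref{thm:split_system}). Drop the looser remark that the path between an endpoint of \(f\) and an endpoint of \(f'\) crosses exactly one of them, since it fails for some choices of endpoints.
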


\begin{proof}
For distinct labels $i,j\in[n]$, set
\[
d_\Sigma(i,j)=\#\{A\mid A^c\in\Sigma(T): |\{i,j\}\cap A|=1\}.
\]
An edge separates $i$ from $j$ exactly when it lies on the unique $i$--$j$ path in $T$.  In particular, $ij\in E(T)$ if and only if $d_\Sigma(i,j)=1$. Thus the entire edge set of $T$ is determined.
\end{proof}

\begin{theorem}[Survivor split-system criterion]\label{thm:split_system}
Let $T\in\T_n$ and $U\in\T_m$.  Then $T\preceq U$ if and only if there is an ordered survivor set $R=\{r_1<\cdots<r_n\}$ such that
\[
\Sigma(T)\subseteq\Sigma_R(U).
\]
\end{theorem}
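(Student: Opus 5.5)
The forward direction is immediate from the definitions. Suppose $(R,q)$ is a display of $T$ in $U$ with display map $\pi=\stnd_R\circ q$. Each target edge $ij$ is the image of exactly one host edge $g$ whose endpoints lie in the fibers over $r_i$ and $r_j$. Deleting $g$ from $U$ splits $[m]$ into two sets, and each of them is a union of whole fibers. This holds because the fibers are connected and $g$ is the only host edge joining the two quotient components of $T-ij$. So the survivor indices on one side of $g$ are exactly $A_e$, and hence $A_e\mid A_e^c\in\Sigma_R(U)$. Ranging over all edges $e$ gives $\Sigma(T)\subseteq\Sigma_R(U)$.

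For the converse, fix $R$ with $\Sigma(T)\subseteq\Sigma_R(U)$, and for each $e\in E(T)$ choose a host edge $g_e$ inducing the split of $e$. These choices are distinct, because distinct edges of $T$ give distinct splits. Delete the $n-1$ edges $g_e$ from $U$. This leaves exactly $n$ components, and I claim each component contains exactly one survivor. Two survivors $r_i,r_j$ lie in different components exactly when some $g_e$ separates them. Every pair $i\ne j$ is separated by the split of any edge on the $i$--$j$ path in $T$, so some chosen $g_e$ separates $r_i$ from $r_j$. Therefore no component contains two survivors. Since there are $n$ components and $n$ survivors, each component contains exactly one survivor.

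Now define $q$ to send every vertex of a component to that component's survivor. The fibers are connected by construction. The quotient tree has $n$ vertices and the $n-1$ edges $g_e$; I must check that $g_e$ joins the blocks of $r_i$ and $r_j$ where $e=ij$. The quotient is a tree, and for each of its edges, deleting it induces on $[n]$ the same bipartition that $g_e$ induces in $U$, namely $A_e\mid A_e^c$. So the quotient, relabeled via $\stnd_R$, is a tree on $[n]$ whose edge-split system equals $\Sigma(T)$. By Lemma~\ref{lem:edge_splits} it is $T$.

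The main point needing care is this last identification. One has to verify that the split induced by $g_e$ in the quotient agrees with its split in $U$. This holds because deleting $g_e$ from $U$ separates $U$ into two sides, each a union of whole components. Then Proposition~\ref{prop:display-equivalence} converts the resulting display into an actual contraction sequence, giving $T\preceq U$.
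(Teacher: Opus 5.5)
Your proof is correct and follows the paper's argument essentially step for step. The forward direction reads off each target split from the unique crossing edge over that target edge, and the converse chooses one host edge per split, shows each of the $n$ resulting components contains exactly one survivor, and identifies the contracted quotient with $T$ via Lemma~\ref{lem:edge_splits}, with your check that each side of $U-g_e$ is a union of whole components supplying a step the paper only asserts.
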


\begin{proof}
If $\pi:U\twoheadrightarrow T$ is a display with survivor set $R$, then every edge of $T$ is produced by at least one edge of $U$ which goes from one contracted block into another.  Thus every split of $T$ is present in $\Sigma_R(U)$.

Conversely, suppose \(\Sigma(T)\subseteq\Sigma_R(U)\), and let \(S_R(U)\)
be the minimal subtree of \(U\) containing \(R\).  Every host edge inducing a
proper survivor split belongs, one where both sides are nonempty, to \(S_R(U)\).  For each
\(\sigma\in\Sigma(T)\), choose an edge \(g_\sigma\) in $S_R(U)$ inducing \(\sigma\) in $\Sigma_R(U)$.
Distinct edges of \(T\) have distinct edge splits.  Indeed, if \(e\ne f\),
then \(f\) lies entirely in one component of \(T-e\); because \(e\) remains
after \(f\) is deleted, the components of \(T-f\) cannot equal those of
\(T-e\).  A host edge induces only one split, so the chosen edges
\(g_\sigma\) are also distinct.

There are \(n-1\) chosen edges.  Deleting them from \(U\) therefore produces
exactly \(n\) connected components.  No component contains two survivors.
Indeed, if \(r_i\) and \(r_j\) remained in the same component, no chosen edge
would separate them.  On the other hand, the target path from \(i\) to \(j\)
contains an edge whose split separates \(i\) and \(j\), and its chosen host
edge would separate \(r_i\) and \(r_j\).  Since the \(n\) survivors lie in
\(n\) components, each component contains exactly one survivor.

Contract each component toward its unique survivor.  The quotient has one
edge for each \(g_\sigma\), and that edge induces the split \(\sigma\) after
standardization.  Its edge-split system is therefore \(\Sigma(T)\).
Lemma~\ref{lem:edge_splits} identifies the standardized quotient with \(T\),
so \((R,q)\) is a display.
\end{proof}

\section{Counting one marked display}\label{sec:marked}

Before counting containing trees, we count pairs $(U,\delta)$ in which
$\delta$ is one chosen display of $T$ in $U$.  This is the part of the
problem for which the contraction blocks are independent. We now formally define display multiplicitly.

\begin{definition}[Display multiplicity and collision moments]\label{def:collision-moments}
For $T\in\T_n$ and $U\in\T_m$, the \emph{display multiplicity} is
\[
c_T(U)=\#\{\text{displays of }T\text{ in }U\}.
\]
For $k\ge1$, the $k$th \emph{factorial collision moment} is
\[
C_k(T;m)=\sum_{U\in\T_m}\binom{c_T(U)}{k}.
\]
An object counted by $C_k(T;m)$ is an labeled on $m$ vertices tree together with an unordered $k$-element set of distinct displays of $T$; such an object is called a \emph{$k$-collision}.  The moment $C_1(T;m)$ is the \emph{marked display count} as noted in Section~\ref{sec:introm}.  The ordered version is introduced in Definition~\ref{def:ordered-moment}.
\end{definition}

Fixing the sizes of the contraction blocks separates the choices: each
vertex of $T$ is replaced by a connected block rooted at its survivor, and
each incident target edge chooses an attachment vertex in that block.

\begin{theorem}[Marked display formula]\label{thm:marked_display}
Let $T\in\T_n$, and let $d_T(i)$ be the degree of vertex $i$ in $T$.  Then
\[
C_1(T;m)=\binom{m}{n}
\sum_{\substack{b_1+\cdots+b_n=m\\b_i\ge1}}
\binom{m-n}{b_1-1,\ldots,b_n-1}
\prod_{i=1}^n b_i^{b_i-2+d_T(i)}.
\]
\end{theorem}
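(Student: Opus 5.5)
We will count pairs $(U,\delta)$ with $\delta=(R,q)$ a display of $T$ in $U\in\T_m$ through a bijection with independently chosen data, and then sum over the block sizes.

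\textbf{Survivors and blocks.} First choose the survivor set $R=\{r_1<\cdots<r_n\}\subseteq[m]$, giving the factor $\binom{m}{n}$. The labels of $R$ are now tied to target vertices by the order condition: $r_i$ represents $i$. Next fix block sizes $b_1+\cdots+b_n=m$ with $b_i\ge1$. Distribute the $m-n$ non-survivor labels among the blocks so that the fiber $q^{-1}(r_i)$ receives $b_i-1$ of them, which contributes the multinomial coefficient $\binom{m-n}{b_1-1,\ldots,b_n-1}$.

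\textbf{Reconstructing the host.} Given this labeled partition, we claim that a host $U$ together with the display $\delta$ corresponds bijectively to the following choices.
\begin{enumerate}
\item A spanning tree on each block. By Cayley's formula there are $b_i^{b_i-2}$ of these; the convention $1^{-1}=1$ covers singleton blocks.
\item For each target edge $ij$, an attachment vertex in block $i$ and one in block $j$. This gives $b_i^{d_T(i)}$ choices at block $i$, since each incident edge independently picks its endpoint.
\end{enumerate}
The host built from these choices is the union of the block trees and one crossing edge per target edge. It is connected and has $\sum_i(b_i-1)+(n-1)=m-1$ edges, so it is a tree. Each block is connected, and contracting the blocks yields exactly $T$ after standardization, because $r_i$ carries index $i$. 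Hence we obtain a display by Definition~\ref{def:display}.

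Conversely, a display determines all of these data:
\begin{itemize}
\item $R$ and the blocks are the fibers of $q$;
\item the induced subtrees on the fibers are the block trees;
\item the crossing edges are in bijection with $E(T)$, by the quotient condition and because a quotient of a tree that is a tree has no parallel edges;
\item their endpoints are the attachment choices.
\end{itemize}
The two constructions are mutually inverse.

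\textbf{Counting.} Multiplying the factors gives $\prod_i b_i^{b_i-2+d_T(i)}$ for each fixed block-size vector, and summing over all vectors yields the formula.

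The main point to check is injectivity. Distinct choices must produce distinct pairs $(U,\delta)$, even when they produce the same host $U$. This holds because $\delta$ records $q$, and $q$ recovers the blocks, the internal trees and the crossing edges. The other thing to verify is that each crossing edge is assigned to a unique target edge, so no overcounting arises from the possibility of several host edges inducing the same split.
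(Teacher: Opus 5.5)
Your proposal is correct and follows essentially the same route as the paper. Both arguments choose $R$, distribute the non-survivor labels into blocks, count Cayley trees on the blocks, choose an attachment vertex for each target-edge incidence, check by edge count that the result is a tree, and recover all of this data from $(R,q)$ for the inverse map. Your explicit check that the crossing edges are in bijection with $E(T)$ (no parallel crossing edges between two connected blocks of a tree) fills in the step the paper states only briefly.
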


\begin{proof}
Choose the survivor set
\(R=\{r_1<\cdots<r_n\}\), giving \(\binom mn\) choices; the survivor \(r_i\)
is forced to represent target vertex \(i\).  Suppose its block has size
\(b_i\ge1\), with \(\sum_i b_i=m\).  Assigning the remaining \(m-n\)
labels to the ordered blocks contributes
\[
\binom{m-n}{b_1-1,\ldots,b_n-1}
\]
choices.  Cayley's formula~\cite{Cayley} gives \(b_i^{b_i-2}\) internal
trees on the \(i\)th block.  We use the usual value \(1\) when \(b_i=1\).
For every target edge connected to \(i\), choose the endpoint of its
corresponding crossing edge inside the \(i\)th block.  These choices
contribute \(b_i^{d_T(i)}\).

The union of the \(n\) internal block trees and the \(n-1\) selected crossing
edges is connected and has
\[
\sum_{i=1}^n(b_i-1)+(n-1)=m-1
\]
edges, hence is a tree.  Conversely, a marked display recovers its survivor
set, ordered blocks, internal block trees, and the unique crossing edge
representing every target edge.  The construction is therefore bijective.
\end{proof}

\begin{remark}\label{rem:marked-degree-data}
The marked count depends on \(T\) only through the labeled degree vector
\((d_T(1),\ldots,d_T(n))\).  In particular, the first collision moment
cannot distinguish targets with the same labeled degree vector; distinctions
between such targets can appear only in higher collision moments.
\end{remark}

\section{Collision moments and bounded cores}\label{sec:cores}
The support count differs from the marked display count exactly when a host
admits more than one display.  For fixed $T$ and $U$, inclusion and exclusion gives the identity
\[
\mathbf 1_{c_T(U)>0}=
\sum_{k\ge1}(-1)^{k+1}\binom{c_T(U)}{k}
\]
which shows that every support count is determined by the simultaneous displays in
one host.  Summing over $U$ gives
\[
\mu_T(m)=C_1(T;m)-C_2(T;m)+C_3(T;m)-\cdots.
\]
We now show that, after their common parts are contracted, any fixed number
of displays can disagree on only a bounded number of vertices.

\begin{definition}[Ordered collision moment]\label{def:ordered-moment}
For $k\ge1$, let $(r)_k=r(r-1)\cdots(r-k+1)$.  The \emph{ordered $k$th collision moment} is
\[
\widehat C_k(T;m)=\sum_{U\in\T_m}(c_T(U))_k.
\]
It counts ordered $k$-tuples of pairwise distinct displays of $T$ in a labeled tree on $[m]$.  Consequently,
\[
\widehat C_k(T;m)=k!\,C_k(T;m).
\]
\end{definition}

\begin{definition}[$k$-overlay state]\label{def:overlay}
Let \(T\in\T_n\).  A \emph{\(k\)-overlay state} of \(T\) at order \(m\) is
a tuple
\[
\Xi=(U,\delta_1,\ldots,\delta_k),\qquad U\in\T_m,
\]
where each \(\delta_a\) is a display with display map \(\pi_a\) and survivor
set
\[
r_{a,1}<\cdots<r_{a,n}.
\]
Its \emph{full label map} is
\[
\Pi_\Xi(v)=(\pi_1(v),\ldots,\pi_k(v))\in[n]^k.
\]
An edge \(uv\in E(U)\) is an \emph{equality edge} if
\(\Pi_\Xi(u)=\Pi_\Xi(v)\).  When
\(\delta_1,\ldots,\delta_k\) are pairwise distinct, \(\Xi\) is a
\emph{\(k\)-collision state}.
\end{definition}

\begin{definition}[Reduced core]\label{def:reduced-core}
The \emph{reduced map core} \(\rho(\Xi)\) is obtained by contracting every
connected component of the equality-edge subgraph to one vertex.  Each
display map factors through the quotient, and the resulting \(k\) maps are
retained as part of the core.  No edge of a reduced map core is an equality
edge.  The reduction does not record which labels are the survivors;
that information is added in Section~\ref{sec:decorated-cores}.
\end{definition}

\begin{figure}[H]
\centering
\begin{tikzpicture}[
  scale=1,
  every node/.style={circle,draw,inner sep=1pt,minimum size=18pt},
  topbox/.style={draw,rounded corners=2pt,fill=blue!10,minimum width=16pt,minimum height=13pt,font=\scriptsize},
  bottombox/.style={draw,rounded corners=2pt,fill=red!10,minimum width=16pt,minimum height=13pt,font=\scriptsize},
  lab/.style={draw=none,rectangle,font=\scriptsize}
]
\node (a) at (0,0) {$1$};
\node (b) at (1.8,0) {$2$};
\node (c) at (3.6,0) {$3$};
\node (d) at (5.4,0) {$4$};
\node (e) at (7.2,0) {$5$};
\draw (a)--(b)--(c)--(d);
\draw[line width=1.4pt] (d)--(e);

\node[topbox]    at (0,0.95) {$1$};
\node[topbox]    at (1.8,0.95) {$1$};
\node[topbox]    at (3.6,0.95) {$2$};
\node[topbox]    at (5.4,0.95) {$3$};
\node[topbox]    at (7.2,0.95) {$3$};

\node[bottombox] at (0,-0.95) {$1$};
\node[bottombox] at (1.8,-0.95) {$2$};
\node[bottombox] at (3.6,-0.95) {$2$};
\node[bottombox] at (5.4,-0.95) {$3$};
\node[bottombox] at (7.2,-0.95) {$3$};

\node[lab] at (8.9,0.95) {$\pi_1$ values};
\node[lab] at (8.9,-0.95) {$\pi_2$ values};
\node[lab,align=center] at (3.6,-1.8) {Thus $\Pi_\Xi(4)=\Pi_\Xi(5)=(3,3)$, so the bold edge is an equality edge.};
\end{tikzpicture}
\caption{A pair-overlay state on a labeled tree $U$. The vertex labels inside the white circles are the labels of $U$. The blue row records the first display map $\pi_1$ and the red row records the second display map $\pi_2$. Equality edges are exactly the edges whose endpoints agree in both rows, equivalently whose full pair labels $\Pi_\Xi(v)=(\pi_1(v),\pi_2(v))$ are equal.}
\label{fig:equality}
\end{figure}
\FloatBarrier

\begin{definition}[Crossing edge]\label{def:crossing-edge}
Let $\pi:U\twoheadrightarrow T$ be a display.  An edge of $U$ is \emph{$\pi$-crossing} if its endpoints lie in different fibers of $\pi$.
\end{definition}

\begin{lemma}[Crossing edges for one display]\label{lem:crossing}
Let $\pi:U\twoheadrightarrow T$ be a display with $T\in\T_n$.  Then exactly $n-1$ edges of $U$ are $\pi$-crossing.
\end{lemma}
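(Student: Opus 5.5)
We prove the lemma by comparing the edges of \(U\) with the quotient tree obtained by contracting the fibers. Write \(F_1,\ldots,F_n\) for the fibers \(\pi^{-1}(1),\ldots,\pi^{-1}(n)\), and set \(b_i=|F_i|\), so that \(\sum_i b_i=m\). By Definition~\ref{def:display}, each \(F_i\) induces a connected subtree of \(U\). A subgraph of a tree is acyclic, so this induced subtree has exactly \(b_i-1\) edges.

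The first step is to note that the edges of \(U\) split into two disjoint classes. An edge either has both endpoints in a single fiber, so it lies in the induced subtree of that fiber, or it has endpoints in different fibers, so it is \(\pi\)-crossing. The induced subtrees of different fibers share no edges, since the fibers are disjoint. Counting the non-crossing edges therefore gives
\[
\sum_{i=1}^n (b_i-1)=m-n .
\]
Since \(U\) has \(m-1\) edges, the number of \(\pi\)-crossing edges is \((m-1)-(m-n)=n-1\).

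There is a second argument via the quotient, which also shows that each target edge corresponds to exactly one crossing edge. The map that sends each crossing edge \(uv\) to the pair \(\{\pi(u),\pi(v)\}\) lands in \(E(T)\), because the quotient is \(T\). This map is onto, since every quotient edge comes from some host edge. It is also one-to-one. Suppose two distinct crossing edges joined the same pair of fibers \(F_i\) and \(F_j\). Together with paths inside the connected fibers \(F_i\) and \(F_j\), they would form a cycle in \(U\), which is impossible.

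The only point needing care is that the quotient is a simple tree with no multi-edges. This is what the cycle argument (or the edge count) guarantees, and it is also implicit in the construction used in Theorem~\ref{thm:marked_display}. We expect no serious obstacle: the counting argument above is complete on its own.
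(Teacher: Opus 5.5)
Your proof is correct. The paper argues only through the quotient: contracting the connected fibers yields $T$, the crossing edges are ``precisely the edges of $T$ after contraction,'' and $T$ has $n-1$ edges. That is essentially your second argument. The paper leaves implicit the point you make explicit with your cycle argument: two distinct crossing edges cannot join the same pair of fibers. That point is what makes the correspondence with $E(T)$ a bijection rather than merely a surjection.

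Your primary argument is genuinely different. You count the $\sum_i(b_i-1)=m-n$ edges lying inside fibers and subtract from $m-1$. This never uses that the quotient is $T$. It therefore proves the more general fact that any partition of a tree into $n$ nonempty connected blocks has exactly $n-1$ crossing edges, and it avoids the multi-edge issue entirely.

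The quotient argument gives finer information in exchange: each target edge is represented by a unique crossing edge. The bijection in Theorem~\ref{thm:marked_display} relies on exactly this. For Theorem~\ref{thm:bounded_core}, only the count $n-1$ is used, so your edge count alone suffices there.
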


\begin{proof}
The fibers of $\pi$ are connected.  Contracting all fibers gives the quotient tree $T$.  The edges of $U$ whose endpoints lie in different fibers are precisely the edges in T after contraction, and $T$ has $n-1$ edges.
\end{proof}

\begin{theorem}[Bounded core reduction]\label{thm:bounded_core}
Let $T\in\T_n$ and let
\[
\Xi=(U,\delta_1,\ldots,\delta_k)
\]
be a $k$-overlay state of $T$.  Then the reduced core $\rho(\Xi)$ has at most
\[
k(n-1)+1
\]
vertices.  In particular, every 2-collision state of an $n$-vertex tree reduces to a reduced map core on at most $2n-1$ vertices.
\end{theorem}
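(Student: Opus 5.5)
The plan is to count the edges of the reduced core $\rho(\Xi)$ and then use the fact that it is a tree. First I will check that $\rho(\Xi)$ really is a tree. The equality-edge subgraph of $U$ is a forest, and its connected components are subtrees of $U$. Contracting each of these vertex-disjoint connected subtrees of a tree to a single vertex gives a tree again: the result is connected, and it has no cycles because any cycle would lift to a cycle in $U$. It follows that
\[
|V(\rho(\Xi))|=|E(\rho(\Xi))|+1 .
\]
The edges of $\rho(\Xi)$ correspond bijectively to the edges of $U$ that are not equality edges. An equality edge has both endpoints inside one component, so it is contracted. A non-equality edge $uv$ has $\Pi_\Xi(u)\neq\Pi_\Xi(v)$. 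Since $\Pi_\Xi$ is constant on each component of the equality-edge subgraph (it is preserved along equality edges), $u$ and $v$ lie in different components, and the edge survives as an edge of the quotient. Distinct host edges never become parallel edges, because the quotient is a tree.

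The main step is to bound the number of non-equality edges. If $uv$ is not an equality edge, then $\pi_a(u)\neq\pi_a(v)$ for some $a\in\{1,\ldots,k\}$. This means $uv$ is $\pi_a$-crossing in the sense of Definition~\ref{def:crossing-edge}. Hence the set of non-equality edges is contained in the union over $a$ of the sets of $\pi_a$-crossing edges. Lemma~\ref{lem:crossing} says each of these sets has exactly $n-1$ elements. A union bound then gives at most $k(n-1)$ non-equality edges, and therefore
\[
|V(\rho(\Xi))|\le k(n-1)+1 .
\]
Setting $k=2$ gives the stated bound $2n-1$ for $2$-collision states.

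I do not expect a genuine obstacle here. The only points needing care are that the quotient is still a tree and that non-equality edges map injectively to core edges. Both follow because every component of the equality-edge subgraph is connected. Distinctness of the displays is never used, so the bound holds for every $k$-overlay state, not only for collision states.
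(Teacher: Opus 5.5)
Your proposal is correct and takes the same route as the paper's proof: every surviving core edge is a non-equality edge, hence $\pi_a$-crossing for some $a$; Lemma~\ref{lem:crossing} caps the crossing edges at $k(n-1)$; and the quotient is a tree. Your additional checks, that the quotient has no parallel edges and that non-equality edges correspond injectively to core edges, just spell out what the paper asserts in one line.
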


\begin{proof}
An edge that survives the equality-edge contraction is not an equality edge.  Hence its endpoints have different full $k$-tuples, so for at least one display map $\pi_a$ the edge is $\pi_a$-crossing.  By Lemma~\ref{lem:crossing}, each display has exactly $n-1$ crossing edges.  Therefore at most $k(n-1)$ edges can survive the core reduction.  The reduced object is still a tree, because it is obtained from a tree by contractions.  A tree with at most $k(n-1)$ edges has at most $k(n-1)+1$ vertices.
\end{proof}

The reduced map core records how the contraction blocks of the displays
interact, but it does not record their survivors.  This distinction matters
even for a single block: two displays may have the same connected partition
and standardized display map but choose different surviving labels.  We now
restore exactly the data that the map core loses.

\section{Survivor-decorated cores and exact collision sums}
\label{sec:decorated-cores}

The reduced core records the simultaneous quotient maps, but this is not
enough to recover the original displays.  A display includes actual surviving
labels, and two displays with identical contraction blocks can differ only in
which label survives in one of those blocks.  An exact lift count must
therefore record three additional features: which survivor roles are filled
by the same labeled vertex, in which connected block each survivor lies,
and the order left on the vertices after standardization.

Let
\[
 \mathcal M_{k,n}=[k]\times[n]
\]
be the set of \emph{survivor marks}; the mark \((a,j)\) denotes the survivor
representing target vertex \(j\) in display \(a\).  A partition of
\(\mathcal M_{k,n}\) will record coincidences among these survivors, i.e. the subsets of the partitions have that those survivor vertices representing the target vertex are the same.  Because
the display indices and target labels are fixed, the parts of this partition
are themselves named subsets of \(\mathcal M_{k,n}\).

\begin{definition}[Survivor-decorated core]\label{def:decorated-core}
An \emph{ordered survivor-decorated \(k\)-core} for \(T\in\T_n\) is a tuple
\[
 \Omega=(H,\varphi_1,\ldots,\varphi_k,\mathcal P,\beta)
\]
with the following data and compatibility conditions.
\begin{enumerate}
\item[\textup{(C1)}] For every $a\in[k]$, the map $\varphi_a:V(H) \to [n]$ gives a display of $T$ in $H$.
\item[\textup{(C2)}] The combined map
\(\Phi=(\varphi_1,\ldots,\varphi_k)\) is reduced: for every \(xy\in E(H)\),
\(\Phi(x)\ne\Phi(y)\).
\item[\textup{(S1)}] \(\mathcal P\) is a partition of
\(\mathcal M_{k,n}\).  Marks in the same class have different survivor vertices representing them, marks in different classes have the opposite.
\item[\textup{(S2)}] The map \(\beta:\mathcal P\to V(H)\) specifies the connected
block containing each part of the partition, i.e. each survivor class.  If \(C_{a,j}\) denotes the class
containing \((a,j)\), then
\[
 \varphi_a\bigl(\beta(C_{a,j})\bigr)=j.
\]
\item[\textup{(S3)}] For each \(a\), the classes
\(C_{a,1},\ldots,C_{a,n}\) are distinct.  The transitive closure of the
relations
\[
 C_{a,1}<C_{a,2}<\cdots<C_{a,n}
 \qquad(a\in[k])
\]
is acyclic which encodes that standardization works compatibly throughout the different displays.
\item[\textup{(D)}] The encoded displays are pairwise distinct: if
\(a\ne b\), then either \(\varphi_a\ne\varphi_b\), or
\(C_{a,j}\ne C_{b,j}\) for some \(j\in[n]\).
\end{enumerate}

Two decorated cores with the same partition \(\mathcal P\) are
\emph{isomorphic} if there is a tree isomorphism \(g:H\to H'\) such that
\[
\varphi_a' \circ g=\varphi_a\quad(a\in[k]),
\qquad
g(\beta(C))=\beta'(C)\quad(C\in\mathcal P).
\]
\end{definition}

Conditions \textup{(C1)} and \textup{(C2)} describe the reduced simultaneous
quotient.  Conditions \textup{(S1)} and \textup{(S2)} restore the equality
and location data of the survivors.  Condition \textup{(S3)} is precisely
the consistency condition for their required numerical order: an acyclic
relation extends to a linear order, while a directed cycle would force an
impossible strict inequality.  Finally, condition \textup{(D)} excludes a
repetition in the ordered tuple of displays.

\begin{lemma}[Finiteness of decorated core types]
\label{lem:finite-decorated-cores}
For fixed \(T\in\T_n\) and \(k\ge1\), there are only finitely many
isomorphism classes of survivor-decorated \(k\)-cores with at most
\(k(n-1)+1\) vertices.
\end{lemma}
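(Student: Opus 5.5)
We prove finiteness by a direct counting argument: every piece of data in a decorated core $\Omega=(H,\varphi_1,\ldots,\varphi_k,\mathcal P,\beta)$ ranges over a finite set once the number of vertices of $H$ is bounded. Set $N=k(n-1)+1$. It suffices to show that, for each $h\le N$, the decorated cores with $|V(H)|=h$ fall into finitely many isomorphism classes.

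The key step is to normalize the vertex set of $H$. Any tree $H$ on $h$ vertices can be relabeled, via an arbitrary bijection $V(H)\to[h]$, so that it becomes a tree on $[h]$. Transporting $\varphi_a$ and $\beta$ along this bijection $g$ gives a decorated core on $[h]$ which is isomorphic to $\Omega$. The isomorphism notion of Definition~\ref{def:decorated-core} allows an arbitrary tree isomorphism $g$ and keeps $\mathcal P$ fixed, and $\mathcal P$ is unchanged by the transport. Hence every isomorphism class has a representative with $V(H)=[h]$.

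For such representatives we bound each piece of data separately:
\begin{itemize}
\item by Cayley's formula there are $h^{h-2}$ choices of the tree $H$;
\item each $\varphi_a$ is a map $[h]\to[n]$, so there are at most $n^{hk}$ choices of $(\varphi_1,\ldots,\varphi_k)$;
\item $\mathcal P$ is a partition of the fixed finite set $\mathcal M_{k,n}$ of size $kn$, so there are at most the Bell number $B_{kn}$ choices;
\item $\beta$ is a map from $\mathcal P$ (at most $kn$ classes) to $[h]$, so there are at most $h^{kn}$ choices.
\end{itemize}
The conditions (C1)--(C2), (S1)--(S3) and (D) only cut this finite set down further. Summing over $h\le N$ gives the explicit bound
\[
\sum_{h=1}^{N} h^{h-2}\,n^{hk}\,B_{kn}\,h^{kn}<\infty
\]
on the number of isomorphism classes.

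The only step needing care is the normalization of the vertex set. One must check that transporting structure along an arbitrary bijection really yields an isomorphic decorated core in the sense of the definition. This holds because $g$ becomes a tree isomorphism onto the transported tree, and the identities $\varphi_a'\circ g=\varphi_a$ and $g(\beta(C))=\beta'(C)$ hold by construction. Everything else is routine finite counting; in particular, Theorem~\ref{thm:bounded_core} is not needed here except to motivate the size bound.
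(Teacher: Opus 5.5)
Your proof is correct and follows the same route as the paper: finitely many trees of bounded order, finitely many $k$-tuples of maps to $[n]$, finitely many partitions of $\mathcal M_{k,n}$, and finitely many maps $\beta$, with the defining conditions only selecting a subset. Your normalization of $V(H)$ to $[h]$ and the explicit numerical bound make precise what the paper's one-line argument leaves implicit.
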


\begin{proof}
There are finitely many trees of bounded order, finitely many \(k\)-tuples of
maps from each such vertex set to \([n]\), finitely many partitions of the
finite set \(\mathcal M_{k,n}\), and finitely many maps from the parts of any
such partition to the core vertices.  The defining conditions select a
subset of this finite collection.
\end{proof}

Fix a representative decorated-core $\Omega$.  For a family of nonempty, pairwise disjoint
sets $(B_x)_{x\in V(H)}$ whose union is $[m]$, define
\begin{equation}
 N_\Omega(B)=\#\left\{
 \begin{array}{l|l}
 \theta:\mathcal P\hookrightarrow[m]
 & \theta(C)\in B_{\beta(C)}\text{ for every }C,\\[-2pt]
 & \theta(C_{a,1})<\cdots<\theta(C_{a,n})
   \text{ for every }a
 \end{array}
 \right\}.
\label{eq:survivor-placement}
\end{equation}
Thus \(\theta\) assigns actual host labels to the survivor classes.  Its
injectivity imposes exactly the equality pattern recorded by \(\mathcal P\),
and the displayed inequalities enforce increasing standardization in each
display.

\begin{definition}[Rigidified lift count]\label{def:rigid-lift}
The rigidified lift count of $\Omega$ to $[m]$ is
\begin{equation}
 L_m^{\mathrm{rig}}(\Omega)=
 \sum_{\substack{[m]=\bigsqcup_{x\in V(H)}B_x\\B_x\ne\varnothing}}
 N_\Omega(B)
 \prod_{x\in V(H)}
 |B_x|^{|B_x|-2+d_H(x)}.
\label{eq:rigid-lift}
\end{equation}
The sum is over blocks indexed by the vertices of the chosen representative
$H$.  We use the Cayley convention that the tree factor for a singleton
block is $1$.
\end{definition}

Here the factor \(|B_x|^{|B_x|-2}\) chooses the internal Cayley tree in the
block \(B_x\), and \(|B_x|^{d_H(x)}\) chooses one attachment vertex for each
core-edge incidence at \(x\).  The value for a singleton block is \(1\).
The lift is called \emph{rigidified} because its equality components are
identified with the named vertices of the chosen representative \(H\).
Accordingly, the blocks in \eqref{eq:rigid-lift} are indexed by \(V(H)\)
rather than taken only up to a core automorphism.

\begin{theorem}[Exact survivor-decorated core formula]
\label{thm:decorated-lifts}
Let $T\in\T_n$ and $k\ge1$.  Let $\mathfrak C_k(T)$ be the finite set of
isomorphism classes of ordered survivor-decorated $k$-cores for $T$ whose
underlying tree has at most \(k(n-1)+1\) vertices; its finiteness follows
from Lemma~\ref{lem:finite-decorated-cores}.  Then
\begin{equation}
 \widehat C_k(T;m)=
 \sum_{[\Omega]\in\mathfrak C_k(T)}
 \frac{L_m^{\mathrm{rig}}(\Omega)}
 {|\operatorname{Aut}\Omega|},
 \qquad
 C_k(T;m)=\frac{\widehat C_k(T;m)}{k!}.
\label{eq:decorated-core-sum}
\end{equation}
Here
\[
 \operatorname{Aut}\Omega=
 \{g\in\operatorname{Aut}(H):
   \varphi_a\circ g=\varphi_a\text{ for all }a,
   \ g(\beta(C))=\beta(C)\text{ for all }C\in\mathcal P\}.
\]
\end{theorem}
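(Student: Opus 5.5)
We will prove the formula by building a bijection. On one side are the ordered $k$-collision states $\Xi=(U,\delta_1,\ldots,\delta_k)$ on $[m]$. On the other side are pairs (decorated-core type, rigidified lift) counted modulo automorphisms. The strategy is to construct a map from collision states to rigidified objects and show that each collision state has exactly $|\operatorname{Aut}\Omega|$ preimages.

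\textbf{Step 1: from a collision state to a decorated core.} Given $\Xi$, form the reduced core $H=\rho(\Xi)$ of Definition~\ref{def:reduced-core}. Let $\varphi_a$ be the factored display maps, and let $B_x\subseteq[m]$ be the equality component contracted to $x$.

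We check (C1) and (C2). Each fiber of $\varphi_a$ is the image of a connected fiber of $\pi_a$, so it is connected, and the quotient is still $T$. No edge of $H$ is an equality edge.

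Next define the partition $\mathcal P$ of $\mathcal M_{k,n}$ by $(a,j)\sim(b,l)$ iff $r_{a,j}=r_{b,l}$. Define $\beta(C)$ to be the core vertex whose block contains the common label. Then (S2) holds because $\pi_a(r_{a,j})=j$. Condition (S3) holds because the actual labels give a linear extension. Condition (D) holds because two displays with the same $\varphi$ and the same survivor classes have the same survivor set $R$ and the same $q$; indeed, $q$ is determined by $\pi_a$ and $R$. By Theorem~\ref{thm:bounded_core}, $|V(H)|\le k(n-1)+1$, so the isomorphism class $[\Omega]$ lies in $\mathfrak C_k(T)$.

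\textbf{Step 2: the rigidified lift.} For a fixed representative $\Omega$, a rigidified lift consists of the following data:
\begin{itemize}
\item a block decomposition $[m]=\bigsqcup_x B_x$ indexed by $V(H)$;
\item a Cayley tree on each block;
\item for each core edge $xy$, an attachment vertex in $B_x$ and one in $B_y$;
\item an injection $\theta$ as in \eqref{eq:survivor-placement}.
\end{itemize}
As in the proof of Theorem~\ref{thm:marked_display}, the block trees together with the $|E(H)|$ crossing edges form a tree $U$ on $[m]$. This gives the product in \eqref{eq:rigid-lift}, and $N_\Omega(B)$ counts the survivor placements.

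Now define $\pi_a=\varphi_a\circ(\text{block map})$ and $r_{a,j}=\theta(C_{a,j})$. Each $\pi_a$-fiber is a union of blocks over a connected set of core vertices, joined by the lifted core edges, so it is connected. The ordering condition makes $r_{a,j}$ increasing in $j$, and $\pi_a(r_{a,j})=j$. So $\delta_a$ is a display, and (D) makes the $\delta_a$ pairwise distinct.

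We must also check that the equality components of the resulting state are exactly the $B_x$. Inside a block all labels agree. Across a lifted core edge the labels differ by (C2). Because $U$ is a tree, two blocks are joined by at most one edge, so the equality subgraph has exactly the components $B_x$. Hence the reduced core of the lift is $H$ itself, with the identity identification, and Step 1 recovers $\Omega$ exactly. Conversely, a collision state together with a chosen isomorphism $\rho(\Xi)\cong\Omega$ determines a rigidified lift uniquely: the blocks, trees, attachment vertices and $\theta$ are all read off from the state.

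\textbf{Step 3: orbit counting, the main obstacle.} Rigidified lifts of $\Omega$ are in bijection with pairs ($\Xi$ of type $[\Omega]$, isomorphism $\rho(\Xi)\to\Omega$ of decorated cores). The set of such isomorphisms is a torsor under $\operatorname{Aut}\Omega$, so each $\Xi$ is counted exactly $|\operatorname{Aut}\Omega|$ times. Summing over classes gives \eqref{eq:decorated-core-sum}, and $C_k=\widehat C_k/k!$ follows from Definition~\ref{def:ordered-moment}.

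The delicate point is that isomorphisms must preserve $\beta$ on named classes, since the partition is fixed. This means $\operatorname{Aut}\Omega$ acts freely on lifts: $g$ relabels the block indexing, and if two lifts differ by a nontrivial $g$ they are genuinely different rigidified objects. I would verify this carefully, including the case where $g$ moves vertices containing no survivor classes. Note that $\operatorname{Aut}\Omega$ is usually trivial but need not be, for instance when it swaps two leaf components carrying identical $\varphi$-values.
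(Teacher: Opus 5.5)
Your proposal is correct and follows the paper's proof essentially step for step: reduction to the equality-edge core together with the survivor partition and $\beta$, an inverse lift built from Cayley block trees, attachment choices and $\theta$ (with reducedness recovering $H$ exactly), and the torsor count of rigidifications. One correction to your closing aside, which does not affect your argument: $\operatorname{Aut}\Omega$ is in fact always trivial, because if $\Phi(x)=\Phi(y)$ with $x\ne y$, then connectivity of each $\varphi_a$-fiber puts the whole $x$--$y$ path inside a single $\Phi$-fiber, contradicting (C2), so $\Phi$ is injective on $V(H)$ (in particular, two leaves on a common neighbor with identical $\varphi$-values cannot occur, and the case you planned to check carefully is vacuous).
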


\begin{proof}
Start with an ambient tree $U\in\T_m$ and an ordered $k$-tuple of distinct
displays.  Write $\pi_a:V(U)\twoheadrightarrow[n]$ for the display maps and
$r_{a,j}$ for their survivors, so
\[
 r_{a,1}<\cdots<r_{a,n},
 \qquad \pi_a(r_{a,j})=j.
\]
Contract the connected components of the equality-edge subgraph, and let
$q:U\twoheadrightarrow H$ be the resulting quotient.  The maps $\pi_a$
factor uniquely as $\pi_a=\varphi_a\circ q$.  By
Theorem~\ref{thm:bounded_core}, $H$ has at most $k(n-1)+1$ vertices.

Partition the marks by
\[
 (a,j)\sim(b,\ell)
 \quad\Longleftrightarrow\quad r_{a,j}=r_{b,\ell}.
\]
For a class $C$, define $\beta(C)=q(r_{a,j})$ using any $(a,j)\in C$.
The definition is independent of the representative.  The increasing
orders of the survivor sets make the order relations acyclic.  Finally, two
displays with the same maps and the same corresponding survivor classes
would be the same display, so pairwise distinctness gives condition
\textup{(D)} of
Definition~\ref{def:decorated-core}.  We have therefore obtained a unique
decorated core up to isomorphism.

Fix now a representative $\Omega$ and rigidify the reduction by identifying
its core with $H$.  Put $B_x=q^{-1}(x)$.  These are nonempty connected
subtrees partitioning $[m]$.  The actual survivors give an injection
$\theta:\mathcal P\hookrightarrow[m]$ counted by $N_\Omega(B)$.  On each
block $B_x$, Cayley's formula gives $|B_x|^{|B_x|-2}$ internal trees.  Every
incidence of a core edge at $x$ chooses one endpoint in $B_x$, contributing
$|B_x|^{d_H(x)}$.  The product of these factors is exactly the summand in
\eqref{eq:rigid-lift}.

Conversely, choose data counted by \eqref{eq:rigid-lift}.  Place the chosen
tree on each block.  For every edge $xy$ of $H$, join the selected endpoint
in $B_x$ to the selected endpoint in $B_y$.  The result is connected and has
\[
 \sum_x(|B_x|-1)+|E(H)|=m-1
\]
edges, hence is a tree.  Define $q$ by the blocks and set
\(\pi_a=\varphi_a\circ q\).  The connected-fiber condition on \(\varphi_a\)
shows that every $\pi_a$ has connected fibers and quotient $T$.  Choose
$\theta(C_{a,j})$ as its survivor representing $j$.  The inequalities in
\eqref{eq:survivor-placement} ensure that increasing standardization sends
these survivors to $1,\ldots,n$.  Reducedness ensures that the equality-edge
reduction is precisely \(H\), and condition \textup{(D)} ensures that the
displays are distinct.  The two constructions are inverse.

It remains to compare rigidified and unrigidified objects.  Fix an overlay
whose decorated core \(K\) has type \([\Omega]\), and fix the chosen
representative \(H\) of that type.  The set of structure-preserving
isomorphisms \(K\to H\) is a torsor for
\(\operatorname{Aut}\Omega\): after choosing one such isomorphism, every
other is obtained by composing it with a unique automorphism of \(\Omega\).
The overlay therefore has exactly
\(|\operatorname{Aut}\Omega|\) rigidifications.  Dividing by this number and
summing over core types gives the first formula in
\eqref{eq:decorated-core-sum}.  The second follows because every unordered
\(k\)-set of displays has exactly \(k!\) orderings.
\end{proof}

\begin{remark}[Why the survivor decoration is necessary]
\label{rem:survivor-necessary}
The map core and its degree sequence do not determine a collision fiber.
For example, a display of a three-vertex path may have a contraction block
containing two host labels.  Choosing either label as the survivor can give
two distinct displays with the same connected partition and standardized
display map.  The factor \(N_\Omega(B)\) records exactly this missing
information.  Consequently, the full collision moments cannot in general be
recovered from the reduced maps and core degrees alone.
\end{remark}

\section{Asymptotic containment}\label{sec:asymptotic}
The support sequence is meant to distinguish trees from each other, so it is natural to
ask whether their containment densities remain separated as the host grows.
This sections shows they do not. Every fixed target occurs in almost every sufficiently large
labeled tree.  It is enough to count the stronger event that the host
contains $T$ exactly on some vertex set.  Moon's forest-counting
formula~\cite{Moon} supplies the single-event probability, and a dependency
estimate controls the overlap among these events. We include the proof for the satisfaction of the reader.

\begin{theorem}[Moon's forest-counting formula]\label{thm:moon}
Let $F$ be a forest on $[m]$ with connected components of sizes $q_1,\ldots,q_r$.  The number of labeled trees on $[m]$ containing $F$ is
\[
q_1q_2\cdots q_r\,m^{r-2}.
\]
\end{theorem}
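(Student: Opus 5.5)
The plan is to reduce the count to a weighted enumeration of trees on the $r$ components, in the same spirit as the block decomposition in the proof of Theorem~\ref{thm:marked_display}. Let the components of $F$ be $F_1,\ldots,F_r$ with $|V(F_i)|=q_i$ and $\sum_i q_i=m$.

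\textbf{Step 1: contract the components.} Take a tree $U$ on $[m]$ with $F\subseteq U$ and contract each $F_i$ to a single vertex $i$. Since each $F_i$ is a connected subtree of $U$, the quotient $Q$ is again a tree on $[r]$.
\begin{itemize}
\item There are no loops: an edge of $U$ inside some $V(F_i)$ must already be an edge of $F_i$, since otherwise it would close a cycle with the path in $F_i$.
\item There are no multiple edges: two edges of $U$ joining $V(F_i)$ to $V(F_j)$, together with paths inside $F_i$ and $F_j$, would form a cycle in $U$.
\end{itemize}
Hence the edges of $U$ outside $F$ are in bijection with $E(Q)$. Each such edge is determined by an edge $ij$ of $Q$ together with a choice of endpoint in $V(F_i)$ and one in $V(F_j)$.

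\textbf{Step 2: the inverse map.} Conversely, take any tree $Q$ on $[r]$ and, for every edge $ij\in E(Q)$, choose endpoints in $V(F_i)$ and $V(F_j)$. Adding these $r-1$ edges to $F$ gives a connected graph with $(m-r)+(r-1)=m-1$ edges, hence a tree containing $F$. The two constructions are mutually inverse. Therefore
\[
\#\{U\supseteq F\}=\sum_{Q\in\T_r}\prod_{i=1}^r q_i^{\,d_Q(i)}.
\]

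\textbf{Step 3: evaluate the sum.} Apply the multivariate Cayley identity
\[
\sum_{Q\in\T_r}\prod_{i=1}^r x_i^{\,d_Q(i)}=x_1\cdots x_r\,(x_1+\cdots+x_r)^{r-2}.
\]
This follows from the Prüfer bijection: for $r\ge2$, vertex $i$ occurs exactly $d_Q(i)-1$ times in the Prüfer word of length $r-2$. Summing $\prod_i x_i^{d_Q(i)-1}$ over all words in $[r]^{r-2}$ gives $(x_1+\cdots+x_r)^{r-2}$. Substituting $x_i=q_i$ and $\sum_i q_i=m$ yields $q_1\cdots q_r\,m^{r-2}$.

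The degenerate case $r=1$ is checked directly. Then $F$ is already a spanning tree, so the count is $1$, and the formula gives $q_1 m^{-1}=m\cdot m^{-1}=1$.

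\textbf{Main obstacle.} The only point needing care is Step 1: showing that the quotient is a simple tree and that edge endpoints are recovered uniquely, so the correspondence is truly bijective. Once the no-loop and no-multiple-edge arguments above are in place, the rest is the standard Prüfer computation.
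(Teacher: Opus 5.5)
Your proposal is correct and follows the same route as the paper: contract each component of $F$ to a vertex, count endpoint choices as $\prod_i q_i^{d_Q(i)}$ over trees $Q$ on the $r$ components, and evaluate the sum with the weighted Cayley identity at $x_i=q_i$. Beyond the paper's sketch, you also justify the bijection (no loops, no multiple edges, and the inverse construction), derive the weighted identity from the Pr\"ufer code, and check the degenerate case $r=1$ separately.
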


\begin{proof}
Contract each component of $F$ to a single vertex.  If the component tree on the $r$ vertices has degrees $d_1,\ldots,d_r$, then the number of ways to realize its intercomponent edges is $\prod_i q_i^{d_i}$.  Summing over all labeled trees on the $r$ components and using the weighted Cayley identity
\[
\sum_{R\in\T_r}\prod_{i=1}^r x_i^{d_R(i)}=(x_1+\cdots+x_r)^{r-2}\prod_{i=1}^r x_i,
\]
with $x_i=q_i$, gives $m^{r-2}\prod_i q_i$.
\end{proof}

\begin{definition}[Exact standardized copies]\label{def:exact-copy}
For each $n$-subset $S=\{s_1<\cdots<s_n\}\subseteq[m]$, let $T_S$ be the copy of $T$ obtained by replacing label $i$ by $s_i$.  For a uniformly random labeled tree $U_m$ on $[m]$, let $I_S(U_m)$ be the indicator of the event $U_m[S]=T_S$, and set
\[
Y_T(U_m)=\sum_{S\in\binom{[m]}{n}} I_S(U_m).
\]
We call $T_S$ an \emph{exact standardized copy} of $T$ on $S$.  If $Y_T(U_m)>0$, then $U_m$ displays $T$.
\end{definition}

\begin{lemma}[Single exact-copy probability]\label{lem:single_prob}
For every fixed $S\in\binom{[m]}n$,
\[
\mathbb P(I_S=1)=\frac{n}{m^{n-1}}.
\]
Consequently,
\[
\lambda_m:=\mathbb E Y_T(U_m)=\binom mn\frac{n}{m^{n-1}}=\frac{m}{(n-1)!}+O_T(1).
\]
\end{lemma}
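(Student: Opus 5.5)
The plan is to apply Moon's forest-counting formula (Theorem~\ref{thm:moon}) to the forest consisting of the $n-1$ edges of $T_S$ together with the $m-n$ isolated vertices of $[m]\setminus S$, and then sum over $S$.

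The event $I_S=1$ means $U_m[S]=T_S$, that is, the induced subgraph of $U_m$ on $S$ is exactly the tree $T_S$. Since $T_S$ is a spanning tree on $S$ and $U_m$ is acyclic, this holds precisely when $U_m$ contains the $n-1$ edges of $T_S$. Any further edge of $U_m$ inside $S$ would close a cycle with $T_S$, so containment of the edge set already forces the induced subgraph to equal $T_S$. Let $F$ be the forest on $[m]$ with one component $T_S$ of size $n$ and $m-n$ singleton components. It has $r=m-n+1$ components with sizes $q_1=n$ and $q_2=\cdots=q_r=1$. Theorem~\ref{thm:moon} then gives
\[
n\cdot m^{m-n-1}
\]
trees containing $F$. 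Dividing by $m^{m-2}$ yields $\mathbb P(I_S=1)=n\,m^{-(n-1)}$, as claimed.

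For the expectation, linearity over the $\binom mn$ subsets gives $\lambda_m=\binom mn n/m^{n-1}$. To get the asymptotics, I would write
\[
\binom mn=\frac{m^n}{n!}\prod_{i=0}^{n-1}\Bigl(1-\frac im\Bigr)=\frac{m^n}{n!}\Bigl(1+O_T(1/m)\Bigr),
\]
since $n$ is fixed. Then
\[
\lambda_m=\frac{n\,m}{n!}\bigl(1+O_T(1/m)\bigr)=\frac{m}{(n-1)!}+O_T(1).
\]

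The only step needing care is the equivalence between the induced-subgraph event and containment of the edge set of $T_S$, which rests on acyclicity. The case $n=1$ is trivial: the probability is $1$ and $\lambda_m=m$, both consistent with the formulas. The main thing to get right in the computation is the component count $r=m-n+1$, which determines the exponent $m^{r-2}=m^{m-n-1}$.
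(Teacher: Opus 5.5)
Your proof is correct and matches the paper's argument: use acyclicity to identify the event $U_m[S]=T_S$ with containment of the edges of $T_S$, then apply Moon's formula to the forest consisting of $T_S$ and $m-n$ singletons to get $n\,m^{m-n-1}$ hosts, and divide by $m^{m-2}$. Your explicit expansion $\binom mn=\frac{m^n}{n!}\bigl(1+O_T(1/m)\bigr)$ for the asymptotic of $\lambda_m$ fills in a step the paper leaves implicit.
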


\begin{proof}
The event $I_S=1$ means that $U_m$ contains the prescribed tree $T_S$ on $S$.  Since $U_m$ is itself a tree, there can be no additional edge among vertices of $S$.  Apply Theorem~\ref{thm:moon} to the forest consisting of one component $T_S$ of size $n$ and $m-n$ isolated vertices.  The number of containing trees is $n m^{m-n-1}$.  Dividing by the total number $m^{m-2}$ of labeled trees gives $n/m^{n-1}$.
\end{proof}

\begin{lemma}[Dependency graph]\label{lem:dependency}
Put a graph on the set $\binom{[m]}n$ by joining $S$ and $R$ when $S\cap R\ne\varnothing$.  This is a dependency graph for the events $I_S=1$.
\end{lemma}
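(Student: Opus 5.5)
The plan is to verify the defining property of a dependency graph directly: if $\mathcal A,\mathcal B\subseteq\binom{[m]}n$ are families with no edge between them, i.e. $\bigl(\bigcup\mathcal A\bigr)\cap\bigl(\bigcup\mathcal B\bigr)=\varnothing$, then the families $\{I_S:S\in\mathcal A\}$ and $\{I_R:R\in\mathcal B\}$ are independent. Intersections of events of the form $\{I_S=1\}$ form a $\pi$-system generating the $\sigma$-algebra of each family. By the $\pi$--$\lambda$ theorem, it therefore suffices to prove the following. For every subfamily $\mathcal A'\subseteq\mathcal A$ and $\mathcal B'\subseteq\mathcal B$,
\[
\mathbb P\Bigl(\textstyle\bigcap_{S\in\mathcal A'}\{I_S=1\}\cap\bigcap_{R\in\mathcal B'}\{I_R=1\}\Bigr)
=\mathbb P\Bigl(\textstyle\bigcap_{S\in\mathcal A'}\{I_S=1\}\Bigr)\,\mathbb P\Bigl(\textstyle\bigcap_{R\in\mathcal B'}\{I_R=1\}\Bigr).
\]
Equivalently, one can expand products of $I$'s and $1-I$'s by inclusion--exclusion and reduce to this identity.

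The first step rewrites each intersection as an edge-containment event. As in the proof of Lemma~\ref{lem:single_prob}, since $U_m$ is a tree and $T_S$ spans $S$, the event $U_m[S]=T_S$ is equivalent to $E(T_S)\subseteq E(U_m)$. Any extra edge inside $S$ would close a cycle. Hence $\bigcap_{S\in\mathcal A'}\{I_S=1\}$ is the event $G_{\mathcal A'}\subseteq U_m$, where $G_{\mathcal A'}$ is the graph on $W_{\mathcal A'}=\bigcup\mathcal A'$ with edge set $\bigcup_{S\in\mathcal A'}E(T_S)$. If $G_{\mathcal A'}$ contains a cycle, the event is empty. Otherwise $G_{\mathcal A'}$ is a forest with, say, $c_{\mathcal A'}$ components of sizes $q_1,\dots,q_{c_{\mathcal A'}}$. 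Adding the isolated vertices outside $W_{\mathcal A'}$ and applying Theorem~\ref{thm:moon} gives
\[
\mathbb P\bigl(G_{\mathcal A'}\subseteq U_m\bigr)=\Bigl(\prod_i q_i\Bigr)\,m^{\,c_{\mathcal A'}-|W_{\mathcal A'}|}.
\]
Here the exponent comes from $r=m-|W_{\mathcal A'}|+c_{\mathcal A'}$ and division by $m^{m-2}$.

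The second step handles the joint event. It is $G_{\mathcal A'}\cup G_{\mathcal B'}\subseteq U_m$. Because $W_{\mathcal A'}\cap W_{\mathcal B'}=\varnothing$, this union is a forest precisely when both pieces are forests. In that case its components are the disjoint union of the two component lists, its vertex set has size $|W_{\mathcal A'}|+|W_{\mathcal B'}|$, and its component count is $c_{\mathcal A'}+c_{\mathcal B'}$. The Moon formula then factors exactly into the product of the two probabilities. If either piece has a cycle, both sides vanish. This proves the displayed identity.

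The only delicate point is the reduction from ``containment of $T_S$ as an induced subgraph'' to ``containment of its edges''. This is where the tree structure of the host is used, and it makes overlapping copies within $\mathcal A'$ harmless: inconsistent overlaps simply produce cycles and probability zero. Once that reduction is in place, the factorization is immediate from the multiplicativity of Moon's formula over disjoint vertex sets.
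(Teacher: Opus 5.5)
Your proof is correct. It uses the same key tool as the paper, the multiplicativity of Moon's formula over vertex-disjoint forests, but it proves a stronger independence statement.

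\textbf{What the paper proves.} The paper fixes one set $S$ and a collection $R_1,\dots,R_t$ of $n$-sets disjoint from $S$. It observes that adjoining $T_S$ as a new component of the forest $F_J=\bigcup_{j\in J}T_{R_j}$ multiplies the containment probability by exactly $n/m^{n-1}$. Inclusion--exclusion then shows that $I_S$ is independent of the family $\{I_R: R\cap S=\varnothing\}$. This is the ``single vertex versus its non-neighbours'' form of a dependency graph.

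\textbf{What you prove.} You show that any two families $\mathcal A,\mathcal B$ with no edge between them are independent. Component counts and vertex counts add, and size products multiply, across the disjoint sets $W_{\mathcal A'}$ and $W_{\mathcal B'}$, so the Moon probabilities factor. A $\pi$--$\lambda$ argument then lifts this factorization to the generated $\sigma$-algebras.

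\textbf{What each approach buys.} The paper's argument is shorter. Your version is the notion of dependency graph assumed in Janson's statement of Suen's inequality, the one cited as Theorem~\ref{thm:suen}: families on either side of an edge-free cut must be independent. So your argument supplies exactly the hypothesis that theorem needs, rather than the weaker local-lemma form. The paper's calculation would extend to the strong form essentially verbatim, by replacing $T_S$ with a second forest on a vertex set disjoint from that of $F_J$; you have simply done this.

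You also make explicit two points the paper treats only informally under ``incompatible'' copies. First, because $U_m$ is a tree, the event $U_m[S]=T_S$ is the same as $E(T_S)\subseteq E(U_m)$. Second, inconsistent overlaps among copies automatically create cycles and so give probability zero.
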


\begin{proof}
Fix $S$, and let $R_1,\ldots,R_t$ be any collection of $n$-sets
disjoint from $S$; the sets $R_i$ need not be disjoint from one another.
For $J\subseteq[t]$, consider the prescribed edge set
\[
 F_J=\bigcup_{j\in J}T_{R_j}.
\]
If the trees $T_{R_j}$ are incompatible, i.e. they give contradictory edge configurations in $F_J$, or their union produces a cycle then both
$\bigcap_{j\in J}\{I_{R_j}=1\}$ and its intersection with
$\{I_S=1\}$ are empty. This trivally gives independence. Otherwise $F_J$ is a forest and the trees are compatible.  Since every $R_j$ is
disjoint from $S$, the tree $T_S$ is a new component disjoint from $F_J$.
Moon's formula then shows that adjoining $T_S$ multiplies the probability
that a uniform tree contains the prescribed forest by exactly
\[
 \frac{n}{m^{n-1}}=\mathbb P(I_S=1).
\]
Indeed, the new component contributes the factor $n$ and reduces the number
of forest components by $n-1$ by joining them together into a tree.

Consequently, for every $J\subseteq[t]$,
\[
 \mathbb P\left(I_S=1,\ I_{R_j}=1\ (j\in J)\right)
 =\mathbb P(I_S=1)
  \mathbb P\left(I_{R_j}=1\ (j\in J)\right).
\]
By inclusion--exclusion, the same identity holds when any of the variables
is required to vanish.  Thus $I_S$ is independent of the $\sigma$-algebra
generated by the variables $I_R$ with $R\cap S=\varnothing$.  Thus this configuration forms a
dependency-graph.
\end{proof}

\begin{theorem}[Suen's inequality, dependency-graph form]\label{thm:suen}
Let $\{I_\alpha\}_{\alpha\in\mathcal I}$ be indicator random variables with dependency graph $\Gamma$.  Let $X=\sum_\alpha I_\alpha$, $\lambda=\mathbb E X$, and define
\[
\Delta=\frac12\sum_\alpha\sum_{\beta\sim\alpha}
\mathbb E[I_\alpha I_\beta],\qquad
\delta=\max_\alpha\sum_{\beta\sim\alpha}\mathbb E[I_\beta].
\]
Then
\[
\mathbb P(X=0)\le
\exp\left(
-\min\left\{
\frac{\lambda^2}{8\Delta},
\frac{\lambda}{2},
\frac{\lambda}{6\delta}
\right\}\right),
\]
where a fraction with zero denominator is interpreted as \(+\infty\).
\end{theorem}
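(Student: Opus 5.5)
This is a standard probabilistic tool, so I would not reprove it from scratch. I would cite Janson's refinement of Suen's correlation inequality (S.~Janson, \emph{New versions of Suen's correlation inequality}, Random Structures Algorithms, 1998), which proves the stated three-term minimum for a dependency graph. Alongside the citation I would record an outline so that the constants $8$, $2$, $6$ are traceable. The outline has two stages: a basic exponential bound, then a thinning step that turns it into the minimum form.

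\textbf{Step 1 (basic bound).} Order the index set and write $P_\alpha=\mathbb P(I_\beta=0\ \text{for all }\beta<\alpha)$. By the chain rule,
\[
\mathbb P(X=0)=\prod_\alpha\bigl(1-\mathbb P(I_\alpha=1\mid I_\beta=0,\ \beta<\alpha)\bigr).
\]
For each $\alpha$, split the earlier indices into neighbours $\beta\sim\alpha$ and non-neighbours. The dependency-graph property makes $I_\alpha$ independent of the non-neighbour variables. Removing the neighbour conditions costs at most $\sum_{\beta\sim\alpha,\beta<\alpha}\mathbb E[I_\alpha I_\beta]$, divided by a ratio of avoidance probabilities. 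Suen's induction bounds that ratio by $\prod_{\beta\sim\alpha}(1-\mathbb E I_\beta)^{-1}\le e^{2\delta}$, using $1-x\ge e^{-2x}$ when $x\le 1/2$. This gives Janson's form
\[
\mathbb P(X=0)\le\exp\bigl(-\lambda+\Delta e^{2\delta}\bigr).
\]

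\textbf{Step 2 (monotonicity and thinning).} For any subfamily $\mathcal J\subseteq\mathcal I$ we have $\{X=0\}\subseteq\{X_{\mathcal J}=0\}$. The restricted graph is still a dependency graph. So Step~1 may be applied to any subfamily, with parameters $\lambda_{\mathcal J}$, $\Delta_{\mathcal J}$, $\delta_{\mathcal J}\le\delta$.

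To reach the minimum form, I would follow Janson's method rather than naive subset selection. He applies the argument of Step~1 to the weighted family $pI_\alpha$, equivalently differentiates $\log\mathbb P(X=0)$ along a continuous interpolation. This produces the bound
\[
\log\mathbb P(X=0)\le-p\lambda+p^2\Delta e^{2p\delta}\qquad(0\le p\le1).
\]
The three regimes then come from three choices of $p$:
\begin{enumerate}
\item Take $p=1$ when $\Delta e^{2\delta}$ is small compared with $\lambda$; this gives the $\lambda/2$ term.
\item Take $p=\lambda/(4\Delta)$ when $\Delta$ dominates; this gives $\lambda^2/(8\Delta)$.
\item Take $p\approx1/(6\delta)$, so that $e^{2p\delta}\le e^{1/3}$ is an absolute constant; this gives $\lambda/(6\delta)$.
\end{enumerate}
Zero denominators ($\Delta=0$ or $\delta=0$) mean the corresponding term is absent, matching the stated $+\infty$ convention.

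\textbf{Main obstacle.} The delicate point is Step~2. Deterministic subfamilies shrink $\lambda$ and $\Delta$ but not $\delta$, so the $e^{2\delta}$ factor cannot be tamed by deleting indices alone. That is why the $p$-interpolation, which scales $\delta$ by $p$, is needed.

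Since this paper only uses the inequality as a black box in the containment argument, with $\lambda\asymp m$, $\Delta=O(m)$ and $\delta=O(1)$, any version with some exponent $-c\lambda$ would suffice there. I would therefore state it with the citation and keep the sketch above as a remark.
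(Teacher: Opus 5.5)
Citing Janson's 1998 theorem is exactly what the paper does (its proof is only a citation to Janson, Alon--Spencer and Suen), so as a proof by reference your proposal matches the paper. The outline you intend to keep as a remark, however, does not make the constants traceable and should be corrected. In Step~2, the device is not a ``weighted family $pI_\alpha$'': those are not indicators, and $\{\sum_\alpha pI_\alpha=0\}=\{X=0\}$, so nothing is gained. Instead, take Bernoulli$(p)$ marks $J_\alpha$, independent of one another and of the $I_\alpha$, and apply the basic bound to the indicators $I_\alpha J_\alpha$. They have the same dependency graph, their parameters are exactly $p\lambda$, $p^2\Delta$, $p\delta$, and $\sum_\alpha I_\alpha J_\alpha\le X$. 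Hence $\log\mathbb P(X=0)\le -p\lambda+p^2\Delta e^{2p\delta}$. (Deleting a random subfamily cannot do this, since averaging over subfamilies controls $\lambda$ and $\Delta$ but not the maximum defining $\delta$; putting the marks inside the probability space scales $\delta$ exactly.) All three terms then come from one choice, $p=\min\{1,\lambda/(4\Delta),1/(3\delta)\}$. Because $e^{2p\delta}\le e^{2/3}<2$ and $p\Delta\le\lambda/4$, you get $p^2\Delta e^{2p\delta}\le p\lambda/2$, hence $\log\mathbb P(X=0)\le -p\lambda/2$, and $p\lambda/2$ is precisely the stated minimum. Your regime (3) cannot work: with $p=1/(6\delta)$ the exponent $-p\lambda+p^2\Delta e^{1/3}$ is strictly larger than $-\lambda/(6\delta)$ whenever $\Delta>0$. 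The right scale is $p=1/(3\delta)$. Your regime (2) also needs $e^{2p\delta}\le 2$, which your case split does not guarantee.

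Step~1 is inaccurate as well. The bound $\prod_{\beta\sim\alpha}(1-\mathbb EI_\beta)^{-1}\le e^{2\delta}$ uses $1-x\ge e^{-2x}$, which needs $\mathbb EI_\beta$ bounded away from $1$, and no such bound is assumed. More seriously, without positive correlation there is no product bound of this kind on ratios of avoidance probabilities. For adjacent $j\sim k$ with $\mathbb P(I_j=0)=\frac12$ and $\mathbb P(I_j=I_k=0)=\varepsilon$, the ratio $\mathbb P(I_j=0)/\mathbb P(I_j=I_k=0)$ is $1/(2\varepsilon)$, while $(1-\mathbb EI_k)^{-1}\approx 2$. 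Getting around this is the real content of Suen's and Janson's proofs, so the sketch should defer to them at that point. Also, estimating the pair terms $\mathbb P(I_\alpha I_\beta=1,\dots)$ by $\mathbb E[I_\alpha I_\beta]$ times an avoidance probability needs the strong notion of dependency graph, which is Janson's hypothesis: index sets with no edges between them carry independent families. Independence of each $I_\alpha$ from its non-neighbours, as you phrase it, is only the Local Lemma notion. State the strong notion explicitly, because the paper's dependency-graph lemma verifies only the weaker property. Moon's product formula does give the strong property by the same argument.
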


\begin{proof}
This is a standard dependency-graph form of Suen's correlation inequality;
see Janson~\cite{Janson}, Alon and Spencer~\cite[Section 8.7]{AlonSpencer},
and Suen~\cite{Suen}.
\end{proof}

\begin{lemma}[Suen parameter bounds]\label{lem:suen_bounds}
For fixed $T\in\T_n$, there are constants $a_T,b_T,d_T>0$ such that
\[
\lambda=\mathbb E Y_T(U_m)\ge a_Tm,
\qquad
\Delta\le b_Tm,
\qquad
\delta\le d_T.
\]
\end{lemma}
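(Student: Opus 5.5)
The three bounds are independent, and I would prove them in turn using Lemma~\ref{lem:single_prob} and Moon's formula (Theorem~\ref{thm:moon}).

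\emph{The bound on $\lambda$.} Lemma~\ref{lem:single_prob} gives $\lambda=\binom mn n/m^{n-1}$. Since $\binom mn\ge (m-n)^n/n!$, for $m\ge 2n$ this is at least $m/(2^n (n-1)!)$. For the finitely many $m$ with $n\le m<2n$, $\lambda>0$ anyway, so shrinking the constant gives $\lambda\ge a_Tm$ for all $m\ge n$.

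\emph{The bound on $\delta$.} Fix $S$. The number of $n$-sets $R$ meeting $S$ is at most $n\binom{m-1}{n-1}$. Each such $R$ contributes $\mathbb E I_R=n/m^{n-1}$. Hence
\[
\sum_{R\sim S}\mathbb E I_R\le n\binom{m-1}{n-1}\frac{n}{m^{n-1}}\le \frac{n^2}{(n-1)!},
\]
which we take as $d_T$.

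\emph{The bound on $\Delta$.} This is the only step requiring a joint probability. Group the pairs $S\ne R$ with $|S\cap R|=j$, for $1\le j\le n-1$. The number of such ordered pairs is at most $\binom mn\binom nj\binom{m-n}{n-j}=O_T(m^{2n-j})$.

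For a fixed pair, $I_SI_R=1$ forces $U_m$ to contain the edge set $T_S\cup T_R$. If this union is not a forest, or the two copies disagree on $S\cap R$, the joint probability is zero. Otherwise we need the component structure of the forest $T_S\cup T_R$ on $S\cup R$, which has $2n-j$ vertices. It has at least $|E(T_S)|=n-1$ edges, so it has at most $2n-j-(n-1)=n-j+1$ components among the vertices of $S\cup R$. Adding the $m-(2n-j)$ isolated vertices gives $r\le m-(n-1)$ components in total.

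By Theorem~\ref{thm:moon}, the probability is $\prod q_i\cdot m^{r-2}/m^{m-2}$. Here $\prod q_i\le (2n)^{2n}$ is bounded and $m^{r-m}\le m^{-(n-1)}$. So each compatible pair has probability $O_T(m^{-(n-1)})$.

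The contribution of intersection size $j$ is therefore $O_T(m^{2n-j-(n-1)})=O_T(m^{n-j+1})$. This is $O(m)$ only when $j=n$, which is excluded. For small $j$ the estimate is much too weak, so the crude edge count must be sharpened.

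The sharpening uses the full edge count of the union. $T_S$ and $T_R$ each induce a tree, and the two copies agree on the induced subgraph of $S\cap R$, since $U_m[S\cap R]$ is determined by both. Let $e_0$ be the number of edges of that common induced forest on $S\cap R$. The union then has $2(n-1)-e_0$ edges. Because $e_0\le j-1$, it has at least $2n-1-j$ edges on $2n-j$ vertices, so the union is a single tree.

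Thus $r=m-2n+j+1$ and the probability is $O_T(m^{j+1-2n})$. Multiplying by the $O(m^{2n-j})$ pairs gives $O_T(m)$ for every $j$. Summing over $1\le j\le n-1$ yields $\Delta\le b_Tm$.

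I expect this edge-counting step, showing that a compatible union of two overlapping copies forms a single tree, to be the main point requiring care. Where the union contains a cycle the joint probability is simply zero, which only helps the bound.
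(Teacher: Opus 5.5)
Your proof is correct and follows the paper's argument. $\lambda$ comes from Lemma~\ref{lem:single_prob}, $\delta$ from counting the sets $R$ meeting $S$, and $\Delta$ from grouping pairs by $|S\cap R|=j$ and applying Moon's formula to the compatible union, which is a tree on $2n-j$ vertices; the only difference is that the paper gets the tree property directly from connectivity (two connected copies sharing a vertex), so your edge count $2(n-1)-e_0\ge 2n-1-j$ and the preliminary crude estimate are unnecessary but harmless.
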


\begin{proof}
The lower bound for $\lambda$ follows from Lemma~\ref{lem:single_prob}.
For \(\Delta\), it is enough to sum over ordered adjacent pairs
\(S\sim R\) with \(S\ne R\), since the factor \(1/2\) can only improve the
upper bound.
If $|S\cap R|=q$, where $1\le q\le n-1$, then there are
$O_T(m^{2n-q})$ such ordered pairs.  If the joint event is possible, the two
prescribed copies overlap compatibly.  Their union is therefore a connected
forest, hence a tree, on $2n-q$ vertices.  Moon's formula gives joint
probability $O_T(m^{-(2n-q-1)})$.  Thus each fixed $q$ contributes
$O_T(m)$, and summing over $q$ gives $\Delta=O_T(m)$.

For $\delta$, fix $S$.  The number of $R$ with $|S\cap R|=q$ is $O_T(m^{n-q})$, and by Lemma~\ref{lem:single_prob}, $\mathbb E I_R=n/m^{n-1}$.  Summing over $q\ge1$ gives
\[
\sum_{R\sim S}\mathbb E I_R=O_T\left(\sum_{q=1}^{n-1}m^{n-q}m^{-(n-1)}\right)=O_T(1).
\]
\end{proof}

\begin{theorem}[Exponential support asymptotic]\label{thm:exp_containment}
For every fixed labeled tree $T\in\T_n$, there exists $c_T>0$ such that
\[
0\le 1-\frac{\mu_T(m)}{m^{m-2}}=O_T(e^{-c_Tm}).
\]
Equivalently,
\[
m^{m-2}-\mu_T(m)=O_T(m^{m-2}e^{-c_Tm}).
\]
\end{theorem}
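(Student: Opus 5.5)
The plan is to bound the avoidance probability by the probability that a uniformly random tree has no exact standardized copy of \(T\), and then apply Suen's inequality using the parameter estimates already in hand. First, \(\mu_T(m)\le m^{m-2}\) trivially, which gives the left inequality \(0\le 1-\mu_T(m)/m^{m-2}\). For the upper bound, let \(U_m\) be uniform on \(\T_m\). By Definition~\ref{def:exact-copy}, if \(Y_T(U_m)>0\) then \(T\preceq U_m\). To see this, take \(S\) with \(U_m[S]=T_S\). Each component of \(U_m-E(T_S)\) contains exactly one vertex of \(S\), because \(T_S\) is a spanning subtree on \(S\) and \(U_m\) is a tree. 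Contracting each component onto its vertex of \(S\) gives a display with survivor set \(S\), whose quotient standardizes to \(T\). Hence
\[
1-\frac{\mu_T(m)}{m^{m-2}}=\mathbb P(T\not\preceq U_m)\le \mathbb P(Y_T(U_m)=0).
\]

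Next, I apply Theorem~\ref{thm:suen} to the indicators \(I_S\), \(S\in\binom{[m]}n\), using the dependency graph of Lemma~\ref{lem:dependency}. Lemma~\ref{lem:suen_bounds} supplies \(\lambda\ge a_Tm\), \(\Delta\le b_Tm\) and \(\delta\le d_T\). The three terms in the minimum are then bounded below as follows:
\[
\frac{\lambda^2}{8\Delta}\ge \frac{a_T^2}{8b_T}m,\qquad \frac{\lambda}{2}\ge\frac{a_T}{2}m,\qquad \frac{\lambda}{6\delta}\ge\frac{a_T}{6d_T}m.
\]
If \(\Delta=0\) or \(\delta=0\), the corresponding fraction is \(+\infty\) and can be ignored. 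Setting \(c_T=\min\{a_T^2/(8b_T),\,a_T/2,\,a_T/(6d_T)\}>0\), we get \(\mathbb P(Y_T=0)\le e^{-c_Tm}\) for all \(m\ge n\). Multiplying through by \(m^{m-2}\) gives the equivalent form of the statement. Small \(m\) are absorbed into the \(O_T\) constant.

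The step that needs the most care is the dependency-graph hypothesis. Lemma~\ref{lem:dependency} shows that \(I_S\) is independent of the \(\sigma\)-algebra generated by the \(I_R\) with \(R\cap S=\varnothing\), which is exactly the form required by Theorem~\ref{thm:suen}. So I take that lemma as given. The only genuine content left is the reduction from an exact copy to a display, and that follows from the connectivity argument above.
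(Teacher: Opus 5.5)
Your proposal is correct and follows the paper's proof essentially step for step: you bound $\mathbb P(T\not\preceq U_m)$ by $\mathbb P(Y_T=0)$ and then feed the estimates of Lemma~\ref{lem:suen_bounds} into Suen's inequality. The one addition is your explicit check that an exact copy $U_m[S]=T_S$ yields a display, since each component of $U_m-E(T_S)$ contains exactly one vertex of $S$; the paper only asserts this in Definition~\ref{def:exact-copy}.
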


\begin{proof}
Apply Theorem~\ref{thm:suen} to $Y_T$.  By Lemma~\ref{lem:suen_bounds}, for suitable constants $a_T,b_T,d_T>0$,
\[
\lambda\ge a_Tm,
\qquad
\Delta\le b_Tm,
\qquad
\delta\le d_T.
\]
Consequently,
\[
\frac{\lambda^2}{8\Delta}\ge \frac{a_T^2}{8b_T}m,
\qquad
\frac{\lambda}{2}\ge\frac{a_T}{2}m,
\qquad
\frac{\lambda}{6\delta}\ge\frac{a_T}{6d_T}m.
\]
Suen's inequality gives
\(\mathbb P(Y_T=0)\le e^{-c_Tm}\) after decreasing \(c_T>0\) if necessary.
If \(U_m\) has no display of \(T\), then \(Y_T(U_m)=0\).  Hence
\[
\mathbb P(T\npreceq U_m)
\le \mathbb P(Y_T=0)
\le e^{-c_Tm}.
\]
Multiplying by \(|\T_m|=m^{m-2}\) proves the result.
\end{proof}

\section{Top-centered star avoidance}\label{sec:topstar}
Fix $n\ge3$, and let $S_n$ be the star on $[n+1]$ centered at $n+1$ as first defined in \cref{sec:introm}.
A display of $S_n$ requires $n$ branches outside the contraction block that
becomes its center.  In ordinary containment, the survivor of that center
block may be any suitable label, and the answer depends on all of those
choices.  We isolate the case in which the largest label survives as
the center.  For this top-centered version, the available branches are
measured exactly by the host's leaves.

\begin{definition}[Top-centered star display]\label{def:top-centered-star}
A display of $S_n$ in a tree $U\in\T_k$ is top-centered if the survivor representing the center $n+1$ is the largest label $k$.  Let $A^{\mathrm{top}}_n(k)$ be the number of trees in $\T_k$ containing no top-centered display of $S_n$.
\end{definition}

\begin{definition}[Leaf-count numbers]\label{def:leaf-count}
For $k\ge2$, let $\tau(k,L)$ be the number of labeled trees on $[k]$ with exactly $L$ leaves.  Impossible parameter values contribute zero.
\end{definition}

\begin{lemma}[Trees with a prescribed number of leaves]\label{lem:tau}
For $k\ge2$,
\[
\tau(k,L)=\binom{k}{L}(k-L)!\stir{k-2}{k-L}.
\]
\end{lemma}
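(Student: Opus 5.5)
We will count labeled trees by their set of non-leaves using Prüfer codes. Recall that for $k\ge2$, the Prüfer bijection sends $\T_k$ to $[k]^{k-2}$. Under it, the vertices that appear in the code are exactly the non-leaves, and a vertex $v$ appears $d(v)-1$ times. A tree on $[k]$ has exactly $L$ leaves precisely when its Prüfer word uses exactly $k-L$ distinct letters. So the proof reduces to counting words of length $k-2$ over $[k]$ whose set of letters has size exactly $k-L$.

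First we choose the set $N$ of non-leaves, equivalently the set of $L$ leaves; this gives $\binom{k}{L}$ choices. Next we count words of length $k-2$ whose set of letters is exactly $N$, with $|N|=k-L$. Such words are surjections from the $k-2$ positions onto $N$. Their number is $(k-L)!\stir{k-2}{k-L}$: partition the positions into $k-L$ nonempty classes, then assign the classes bijectively to the elements of $N$. Multiplying the two counts gives
\[
\tau(k,L)=\binom{k}{L}(k-L)!\stir{k-2}{k-L}.
\]

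It remains to check the boundary conventions, so that impossible parameter values really contribute zero, matching Definition~\ref{def:leaf-count}.

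\begin{itemize}
\item For $k=2$ the Prüfer word is empty. The formula gives $\binom 2L(2-L)!\stir{0}{2-L}$, which is nonzero only for $L=2$, where it equals $1$. This matches the single tree on $[2]$, which has two leaves.
\item For $k\ge3$ and $L\le1$ we get $k-L>k-2$, so the Stirling number vanishes. This matches the fact that every tree with at least two vertices has at least two leaves.
\item For $L=k$ with $k\ge3$, the factor $\stir{k-2}{0}=0$, as it should be.
\end{itemize}

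There is no real obstacle here. The only point needing care is the standard fact that the letters appearing in the Prüfer code are exactly the non-leaves, which follows from the degree identity: each vertex $v$ appears $d(v)-1$ times. We will cite this fact rather than reprove it.
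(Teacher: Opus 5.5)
Your proof is correct and is essentially the paper's argument: the leaves are exactly the labels absent from the Pr\"ufer word, so you choose the $L$ absent labels and count surjections of the $k-2$ positions onto the remaining $k-L$ labels. Your boundary checks (the case $k=2$, and the vanishing for $L\le 1$ and for $L=k$ when $k\ge 3$) go slightly beyond what the paper writes out, and they are all correct.
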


\begin{proof}
In the Pr\"ufer code of a labeled tree on $[k]$, the leaves are exactly the labels that do not occur.  Choose the $L$ absent labels.  The Pr\"ufer word has length $k-2$ and must use every one of the remaining $k-L$ labels at least once.  The number of such words is $(k-L)!\stir{k-2}{k-L}$, by the standard interpretation of Stirling numbers of the second kind; see, for example, Stanley~\cite[Section~1.9]{Stanley1}.  Multiplying by $\binom{k}{L}$ proves the formula.
\end{proof}

\begin{lemma}[Leaf criterion]\label{lem:leafcriterion}
Let $U\in\T_k$ have $L$ leaves.  If $L<n$, then $U$ avoids top-centered $S_n$.  If $L>n$, then $U$ contains a top-centered $S_n$.  If $L=n$, then $U$ contains a top-centered $S_n$ if and only if the largest label $k$ is not a leaf.
\end{lemma}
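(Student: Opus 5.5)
The plan is to translate a top-centered display of $S_n$ into a statement about a single connected vertex set of $U$, and then count components. Let $(R,q)$ be a top-centered display of $S_n$ in $U\in\T_k$. Write $C=q^{-1}(k)$ for the center block; it is connected and contains $k$. Every other block maps to a leaf of the star, so in the quotient it is adjacent only to the center.

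\textbf{Step 1: reformulate displays via components of $U-C$.} Each non-center block is connected, and its only crossing edge goes to $C$. Since $U$ is a tree, the non-center blocks are then exactly the connected components of $U-C$. Conversely, suppose $C\ni k$ is a connected vertex set such that $U-C$ has exactly $n$ components. Take these components as the leaf blocks and choose any survivor in each, with $k$ as the survivor of $C$. The quotient is a star with $n$ leaves whose center carries the largest survivor $k$. After standardization the center becomes $n+1$, and the leaves become $1,\dots,n$ in some order. Since every bijection of the leaves gives the same labeled star $S_n$, this is a top-centered display. So the criterion to establish is:
\[
U \text{ contains a top-centered } S_n \iff \exists\, C\ni k \text{ connected such that } U-C \text{ has exactly } n \text{ components}.
\]

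\textbf{Step 2: upper bound on the number of components.} For connected $C$, each component $D$ of $U-C$ is attached to $C$ by a single edge. A vertex of $D$ at maximal distance from $C$ is a leaf of $U$. Hence the number of components of $U-C$ is at most the number of leaves of $U$ lying outside $C$.
\begin{itemize}
\item If $L<n$, this number is below $n$, so $U$ avoids top-centered $S_n$.
\item If $L=n$ and $k$ is a leaf, then $k\in C$ leaves at most $n-1$ leaves outside $C$, so again no display exists.
\end{itemize}

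\textbf{Step 3: construction in the remaining cases.} If $L>n$, or $L=n$ with $k$ not a leaf, pick $n$ leaves of $U$ different from $k$ and let $C$ be everything else. Deleting any set of leaves from a tree leaves a connected set whenever the remainder is nonempty, and here $C\ni k$. Then $U-C$ consists of $n$ singletons, so Step 1 yields a display.

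\textbf{Main obstacle.} The only point needing care is Step 1, namely that the non-center blocks are exactly the components of $U-C$ and that leaf survivors may be chosen arbitrarily. This rests on the star's quotient condition and on its invariance under permuting leaf labels. The degenerate case of a two-vertex tree cannot occur, because $n\ge3$ forces $k\ge n+1\ge4$ whenever a display is possible.
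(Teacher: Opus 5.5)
Your proof is correct and follows essentially the same route as the paper. The paper argues in the same way: each leaf block contains its own leaf of $U$, found by moving away from the boundary edge. The center block contains $k$, which removes one leaf when $k$ is a leaf. Conversely, $n$ leaves other than $k$ are taken as singleton blocks and the connected remainder as the center. Your Step~1 makes explicit two facts the paper uses without comment: the non-center blocks are exactly the components of $U-C$, and the leaf survivors may be chosen freely.
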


\begin{proof}
Every leaf block of a tree display contains a leaf of the original tree.
Indeed, such a block is connected and has exactly one edge to its complement.
Starting at the endpoint of that boundary edge and moving within the block as
far as possible produces a vertex of degree one in \(U\).  Distinct quotient
leaf blocks contain distinct original leaves.  A quotient therefore cannot
have more leaves than its host, proving avoidance when \(L<n\).

Suppose \(L>n\).  Choose \(n\) leaves different from \(k\); this is possible
even when \(k\) is a leaf because then \(L-1\ge n\).  Use the chosen leaves
as singleton leaf blocks.  Removing them leaves a connected subtree
containing \(k\), which we take as the center block.  Contracting that block
to \(k\) gives a top-centered display of \(S_n\).

Now let \(L=n\).  If \(k\) is not a leaf, use all \(n\) leaves as singleton
leaf blocks.  Their complement is a connected subtree containing \(k\), so
it can be contracted to the center.  Conversely, suppose \(k\) is a leaf and
a top-centered display exists.  Its center block contains \(k\), while each
of the \(n\) quotient leaf blocks contains a distinct original leaf.  Only
\(n-1\) original leaves remain outside the center block, a contradiction.
\end{proof}

\begin{theorem}[Exact top-centered star avoidance formula]\label{thm:Aformula}
For $n\ge3$ and $k\ge2$,
\[
A^{\mathrm{top}}_n(k)=\sum_{L=0}^{n-1}\tau(k,L)+\frac{n}{k}\tau(k,n).
\]
For $k\ge n$, equivalently,
\[
A^{\mathrm{top}}_n(k)=
\sum_{L=0}^{n-1}\binom{k}{L}(k-L)!\stir{k-2}{k-L}
+\binom{k-1}{n-1}(k-n)!\stir{k-2}{k-n}.
\]
\end{theorem}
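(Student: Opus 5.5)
We classify trees on \([k]\) by their number of leaves \(L\) and apply Lemma~\ref{lem:leafcriterion}. Every tree with \(L\le n-1\) leaves avoids top-centered \(S_n\), contributing \(\sum_{L=0}^{n-1}\tau(k,L)\). Every tree with \(L>n\) contains it, contributing nothing. A tree with exactly \(n\) leaves avoids top-centered \(S_n\) precisely when the label \(k\) is a leaf. So the boundary term is the number of trees on \([k]\) with exactly \(n\) leaves, one of which is the vertex \(k\).

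For this boundary count we use symmetry. The symmetric group on \([k]\) acts on \(\T_k\) and preserves the leaf count. Hence, among trees with exactly \(n\) leaves, the number in which a given label \(v\) is a leaf does not depend on \(v\). Summing over \(v\in[k]\) counts each such tree once per leaf, giving \(n\,\tau(k,n)\) in total. Each individual label therefore accounts for \(\frac nk\tau(k,n)\) trees, and in particular so does \(k\). Adding the pieces yields the first formula.

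For the explicit form with \(k\ge n\), substitute Lemma~\ref{lem:tau} into each term. The only simplification needed is
\[
\frac nk\binom kn=\binom{k-1}{n-1},
\]
which turns \(\frac nk\tau(k,n)\) into \(\binom{k-1}{n-1}(k-n)!\stir{k-2}{k-n}\).

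Alternatively, this term can be obtained directly from the Pr\"ufer bijection. Requiring \(k\) to be absent from the code means choosing the other \(n-1\) absent labels from \([k-1]\). The code must then be a surjection from the \(k-2\) positions onto the remaining \(k-n\) labels, which gives the same expression and serves as a check.

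No step is a serious obstacle. The points needing care are the degenerate parameter values. For \(k=2\) the unique tree has two leaves, and terms with \(L=0\) or \(L=1\) vanish by the convention in Definition~\ref{def:leaf-count}. The restriction \(k\ge n\) in the explicit form is there so that the binomial coefficients and Stirling numbers are used only with valid arguments. Finally, the criterion for \(L=n\) already handles whether the vertex \(k\) can serve as the center, so no further case analysis is required.
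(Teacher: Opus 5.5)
Your proposal is correct and follows essentially the same route as the paper: split by leaf count using Lemma~\ref{lem:leafcriterion}, then evaluate the boundary class by double counting (tree, leaf) pairs, using label symmetry to get $\frac{n}{k}\tau(k,n)$, and finally substitute Lemma~\ref{lem:tau} with $\frac{n}{k}\binom{k}{n}=\binom{k-1}{n-1}$. Two of your additions go slightly beyond the paper's brief wording of that step: you make the symmetric-group action explicit, and you check the boundary term directly from the Pr\"ufer code.
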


\begin{proof}
By Lemma~\ref{lem:leafcriterion}, the avoiders are exactly the trees with
fewer than $n$ leaves, together with the trees with exactly $n$ leaves in
which the largest label $k$ is a leaf.  The first class contributes
\[
\sum_{L=0}^{n-1}\tau(k,L),
\]
and the second is determined by labeling.  Counting
pairs consisting of such a tree and one of its leaves shows that any fixed
label is a leaf in
\[
 \frac{n}{k}\tau(k,n)
\]
of them.  Taking that label to be $k$ proves the first formula.  The second
follows by substituting the Pr\"ufer-code formula from Lemma~\ref{lem:tau}
and simplifying
\[
 \frac{n}{k}\binom{k}{n}
 =\binom{k-1}{n-1}.
\]
\end{proof}

\section{Generating functions and sharp asymptotics}\label{sec:starGF}
For standard background on Stirling numbers of the second kind and their
interpretation in terms of set partitions, see
\cite[Section~1.9]{Stanley1} and
\cite[Section~6.1]{GrahamKnuthPatashnik}.  We record the following
classical fixed-defect specialization in the precise form needed below.
The short proof is included to identify the leading term and to fix our
polynomial conventions. The leaf formula is also precise enough to determine the analytic form and
the leading asymptotic size of the avoidance sequence.  We record both in
this section.

\begin{definition}[Star-avoidance generating functions]\label{def:star-egfs}
For $L,n\ge0$, define
\[
T_L(x)=\sum_{k\ge2}\tau(k,L)\frac{x^k}{k!},
\qquad
A^{\mathrm{top}}_n(x)=\sum_{k\ge2}A^{\mathrm{top}}_n(k)\frac{x^k}{k!}.
\]
These are the exponential generating functions for fixed-leaf trees and top-centered star avoiders, respectively.
\end{definition}
\begin{theorem}[Exponential generating function]\label{thm:EGF}
For $n\ge3$,
\[
A^{\mathrm{top}}_n(x)=\sum_{L=0}^{n-1}T_L(x)+n\int_0^x\frac{T_n(t)}{t}\,dt.
\]
\end{theorem}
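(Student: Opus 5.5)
We derive the identity by multiplying the exact coefficient formula of Theorem~\ref{thm:Aformula} by $x^k/k!$ and summing over $k\ge2$. The first part, $\sum_{L=0}^{n-1}\tau(k,L)$, is a finite sum in $L$. Interchanging it with the sum over $k$ gives exactly $\sum_{L=0}^{n-1}T_L(x)$ by Definition~\ref{def:star-egfs}. This interchange is legitimate because we only claim an identity of formal power series, or of convergent series on the common disc of convergence, since every coefficient is bounded by $k^{k-2}$.

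The remaining term is $\sum_{k\ge2}\frac{n}{k}\tau(k,n)\frac{x^k}{k!}$. We identify it termwise with $n\int_0^x T_n(t)\,t^{-1}\,dt$ using
\[
\int_0^x \frac{t^{k-1}}{k!}\,dt=\frac{1}{k}\cdot\frac{x^k}{k!}.
\]
Because $T_n(t)$ has no constant term (its sum starts at $k\ge2$), the quotient $T_n(t)/t=\sum_{k\ge2}\tau(k,n)t^{k-1}/k!$ is a genuine power series. Its integral from $0$ is therefore well defined, either formally or analytically inside the radius of convergence. Termwise integration of a power series then produces exactly the desired coefficients $\frac{n}{k}\tau(k,n)$.

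No step is a real obstacle. The only care needed is at the boundary. Theorem~\ref{thm:Aformula} is stated for all $k\ge2$, with $\tau(k,L)=0$ for impossible parameters by Definition~\ref{def:leaf-count}, so the small values $k<n$ need no separate treatment. The case $L=0$ contributes nothing, since no tree on $k\ge2$ vertices has zero leaves, and $T_1=0$ for the same reason. Both are harmless to keep in the sum. Adding the two contributions gives the stated formula.
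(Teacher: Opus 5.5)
Your proposal is correct and matches the paper's proof essentially line by line. You multiply the coefficient formula of Theorem~\ref{thm:Aformula} by $x^k/k!$, sum over $k\ge2$, and identify the $\frac{n}{k}\tau(k,n)$ term with the formal termwise integral of $T_n(t)/t$, which is well defined because $T_n$ has no constant term; your side remarks on convergence and on the vanishing of $T_0$ and $T_1$ are accurate but not needed for the formal identity.
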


\begin{proof}
The exact formula in Theorem~\ref{thm:Aformula} gives
\[
A^{\mathrm{top}}_n(k)=\sum_{L=0}^{n-1}\tau(k,L)+\frac{n}{k}\tau(k,n).
\]
Multiplying by $x^k/k!$ and summing over $k\ge2$, the first summand gives $\sum_{L=0}^{n-1}T_L(x)$.  For the second summand,
\[
\sum_{k\ge2}\frac{n}{k}\tau(k,n)\frac{x^k}{k!}
=n\sum_{k\ge2}\tau(k,n)\frac{x^k}{k\,k!}.
\]
On the other hand,
\[
\int_0^x\frac{T_n(t)}{t}\,dt
=\int_0^x\sum_{k\ge2}\tau(k,n)\frac{t^{k-1}}{k!}\,dt
=\sum_{k\ge2}\tau(k,n)\frac{x^k}{k\,k!}.
\]
The integral is a formal power series integral, and it is well-defined because $T_n(t)$ has zero constant term.  Combining the two contributions proves the formula.
\end{proof}

We next show that the normalized sequence $A_n^{\mathrm{top}}(k)/k!$ is a polynomial in $k$ up to a single $1/k$ correction.  The corresponding exponential generating function is therefore rational apart from one logarithmic term.

\begin{definition}[Near-diagonal Stirling polynomials]\label{def:near-diagonal-stirling}
For $r\ge0$, the \emph{near-diagonal Stirling polynomial} $P_r(N)$ is defined by
\[
P_r(N)=\stir{N}{N-r}.
\]
For fixed $r$, this quantity is a polynomial in $N$; the polynomiality is proved in Theorem~\ref{thm:polylog}.
\end{definition}

\begin{theorem}[Polynomial and logarithmic structure]\label{thm:polylog}
Fix $n\ge3$.  Then, for all $k\ge n+1$,
\[
\frac{A_n^{\mathrm{top}}(k)}{k!}=
\sum_{L=2}^{n-1}\frac{1}{L!}P_{L-2}(k-2)
+\frac{1}{(n-1)!k}P_{n-2}(k-2).
\]
Consequently there are a polynomial $Q_n(k)\in\mathbb Q[k]$ of degree $2n-5$ and a rational number $\gamma_n$ such that
\[
A_n^{\mathrm{top}}(k)=k!\left(Q_n(k)+\frac{\gamma_n}{k}\right).
\]
Moreover
\[
[k^{2n-5}]Q_n(k)=\frac{1}{2^{n-2}(n-2)!(n-1)!}.
\]
Equivalently, the exponential generating function has the form
\[
A_n^{\mathrm{top}}(x)=R_n(x)-\gamma_n\log(1-x),
\]
where $R_n(x)\in\mathbb Q(x)$ has possible pole only at $x=1$, of order at most $2n-4$.
\end{theorem}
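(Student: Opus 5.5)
Substitute the Prüfer formula of Lemma~\ref{lem:tau} into Theorem~\ref{thm:Aformula} and divide by $k!$:
\[
\frac{\tau(k,L)}{k!}=\frac{1}{L!}\stir{k-2}{k-L}=\frac{1}{L!}P_{L-2}(k-2),
\]
since $(k-2)-(k-L)=L-2$. The terms $L=0,1$ vanish for $k\ge3$: a tree on at least three vertices has at least two leaves. Correspondingly, $\stir{k-2}{k}=\stir{k-2}{k-1}=0$. The boundary term becomes
\[
\frac{n}{k}\cdot\frac{\tau(k,n)}{k!}=\frac{1}{(n-1)!\,k}P_{n-2}(k-2).
\]
This gives the displayed identity for $k\ge n+1$. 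The restriction $k\ge n+1$ guarantees that $k-L\ge1$ throughout, so that no degenerate Stirling values intrude.

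Next I prove polynomiality of $P_r(N)=\stir{N}{N-r}$ together with its leading coefficient. A partition of $[N]$ into $N-r$ blocks has non-singleton blocks of total size at most $2r$. Classify partitions by the set $W$ of elements lying in non-singleton blocks, with $|W|=j$ and $r<j\le 2r$ (for $r\ge1$). The partition restricted to $W$ has no singletons and has $j-r$ blocks. Hence
\[
P_r(N)=\sum_{j}\binom{N}{j}c_{r,j},
\]
where $c_{r,j}$ is independent of $N$. This is a polynomial of degree $2r$. Its top term comes from $j=2r$, where all blocks are pairs, so $c_{r,2r}=(2r)!/(2^r r!)$, giving leading coefficient $1/(2^r r!)$.

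Now expand $P_{n-2}(k-2)$ as a polynomial in $k$ and divide by $k$. Write $P_{n-2}(k-2)=k\,G(k)+P_{n-2}(-2)$ by polynomial division, and set $\gamma_n=P_{n-2}(-2)/(n-1)!$. Then
\[
Q_n(k)=\sum_{L=2}^{n-1}\frac{P_{L-2}(k-2)}{L!}+\frac{G(k)}{(n-1)!}.
\]
The term $G(k)$ has degree $2n-5$. The sum has degree at most $2n-6$, coming from $L=n-1$. So the leading coefficient of $Q_n$ is $\frac{1}{(n-1)!}\cdot\frac{1}{2^{n-2}(n-2)!}$, as claimed.

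For the EGF statement, use $\sum_k k^j x^k$, or more conveniently $\sum_k \binom{k}{j}x^k=x^j/(1-x)^{j+1}$. Any polynomial $Q$ of degree $d$ gives $\sum_k Q(k)x^k$ rational with only a pole at $1$ of order $d+1=2n-4$. Also $\sum_k x^k/k=-\log(1-x)$. The finitely many initial terms with $k\le n$ differ from the formula, but they only add a polynomial correction to $R_n$.

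The main obstacle is the polynomiality and leading-term step for near-diagonal Stirling numbers, specifically justifying that the $W$-classification gives an $N$-independent $c_{r,j}$. A secondary check is that the coefficient extraction in $\log$ has the sign claimed, $-\gamma_n\log(1-x)$.
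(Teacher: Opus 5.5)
Your proof is correct and follows the paper's route. You substitute $\tau(k,L)=\frac{k!}{L!}P_{L-2}(k-2)$ into the exact avoidance formula, split off $P_{n-2}(-2)$ by polynomial division to obtain $Q_n$ and $\gamma_n$, and sum termwise to get the rational-plus-logarithm form. The only difference is cosmetic: you prove polynomiality of $P_r(N)$ by classifying on the set of non-singleton elements (giving $\sum_j\binom{N}{j}c_{r,j}$) rather than on the block-size profile $\lambda\vdash r$, and the step you flagged as the main obstacle is already settled, since $c_{r,j}$ is just the number of singleton-free partitions of a $j$-set into $j-r$ blocks.
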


\begin{proof}
We first recall why the near-diagonal Stirling number is polynomial.  A partition of an $N$-element set into $N-r$ blocks is obtained by replacing some singleton blocks with non-singleton blocks of total deficit $r$.  If $\lambda\vdash r$ and $m_j(\lambda)$ is the number of parts of $\lambda$ equal to $j$, then the non-singleton blocks have sizes $j+1$, and
\[
P_r(N)=
\sum_{\lambda\vdash r}
\frac{(N)_{r+\ell(\lambda)}}
{\prod_{j\ge1}((j+1)!)^{m_j(\lambda)}m_j(\lambda)!}.
\]
Thus $P_r(N)$ is a polynomial of degree $2r$, with leading coefficient $1/(2^r r!)$; the leading term comes from the $r$ blocks of size $2$.

By Theorem~\ref{thm:Aformula},
\[
A_n^{\mathrm{top}}(k)=\sum_{L=0}^{n-1}\tau(k,L)+\frac{n}{k}\tau(k,n).
\]
For $L\ge2$, Lemma~\ref{lem:tau} gives
\[
\tau(k,L)=\binom{k}{L}(k-L)!\stir{k-2}{k-L}
=\frac{k!}{L!}P_{L-2}(k-2).
\]
Since $\frac{n}{k}\cdot\frac{1}{n!}=\frac{1}{(n-1)!k}$, this proves the displayed normalized formula.

Set
\[
\gamma_n=\frac{P_{n-2}(-2)}{(n-1)!}.
\]
Then
\[
\frac{P_{n-2}(k-2)}{k}
=
\frac{P_{n-2}(k-2)-P_{n-2}(-2)}{k}
+\frac{P_{n-2}(-2)}{k}.
\]
The numerator in the first fraction vanishes at $k=0$, so it is divisible by $k$ in $\mathbb Q[k]$.  Hence, for $k\ge n+1$, $A_n^{\mathrm{top}}(k)/k!$ has the form $Q_n(k)+\gamma_n/k$ for a polynomial $Q_n(k)$.  The highest-degree term comes from $P_{n-2}(k-2)/((n-1)!k)$.  Since $P_{n-2}$ has degree $2n-4$ and leading coefficient $1/(2^{n-2}(n-2)!)$, the leading coefficient of $Q_n$ is
\[
\frac{1}{2^{n-2}(n-2)!(n-1)!}.
\]

Apart from the finitely many coefficients with $2\le k<n+1$, the
exponential generating function is
\[
\sum_{k\ge2}Q_n(k)x^k+
\gamma_n\sum_{k\ge2}\frac{x^k}{k}.
\]
The first series is rational with possible pole only at $x=1$, of order at most $\deg Q_n+1=2n-4$, while the second differs from $-\gamma_n\log(1-x)$ by a polynomial.  The finitely many initial corrections are absorbed into the rational function.  This proves the stated rational-logarithmic form.
\end{proof}

\begin{lemma}[Fixed-leaf asymptotics]\label{lem:fixedleafasymp}
For each fixed $L\ge2$,
\[
\tau(k,L)=\frac{k!}{L!\,2^{L-2}(L-2)!}k^{2L-4}+O_L(k!k^{2L-5}).
\]
\end{lemma}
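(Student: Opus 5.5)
We start from Lemma~\ref{lem:tau}, which gives
\[
\tau(k,L)=\binom{k}{L}(k-L)!\stir{k-2}{k-L}=\frac{k!}{L!}\stir{k-2}{k-L}.
\]
For $L\ge2$ we set $N=k-2$ and $r=L-2$. Then $k-L=N-r$, so
\[
\tau(k,L)=\frac{k!}{L!}P_{L-2}(k-2).
\]
This identity was already used in the proof of Theorem~\ref{thm:polylog}. The claim therefore reduces to the statement that $P_r(N)$ is a polynomial of degree $2r$ with leading coefficient $1/(2^r r!)$, together with a control on the lower-order terms.

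For the polynomial structure we use the expansion from the proof of Theorem~\ref{thm:polylog}:
\[
P_r(N)=\sum_{\lambda\vdash r}\frac{(N)_{r+\ell(\lambda)}}{\prod_j((j+1)!)^{m_j(\lambda)}m_j(\lambda)!}.
\]
Each summand is a polynomial in $N$ of degree $r+\ell(\lambda)\le 2r$. Equality $\ell(\lambda)=r$ holds only for $\lambda=(1^r)$, whose coefficient is $1/(2^r r!)$. All other partitions give degree at most $2r-1$. In addition, $(N)_{2r}=N^{2r}+O_r(N^{2r-1})$.

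It follows that $P_{L-2}(N)=N^{2L-4}/(2^{L-2}(L-2)!)+O_L(N^{2L-5})$ for $N\ge1$. There are finitely many partitions of $r$, so the implied constant depends only on $L$.

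Substituting $N=k-2$, we have $(k-2)^{2L-4}=k^{2L-4}+O_L(k^{2L-5})$, and any polynomial of degree at most $2L-5$ in $k-2$ is $O_L(k^{2L-5})$ for $k\ge2$. Multiplying by $k!/L!$ gives
\[
\tau(k,L)=\frac{k!}{L!\,2^{L-2}(L-2)!}k^{2L-4}+O_L(k!\,k^{2L-5}).
\]

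There is no substantial obstacle here. The only points needing care are the small cases and the convention for impossible values:

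\begin{itemize}
\item When $L=2$ we have $r=0$ and $P_0(N)=1$. This gives $\tau(k,2)=k!/2$, matching the formula with a zero error term.
\item For small $k$, where $k-L<1$ or the Stirling number vanishes, the error term absorbs finitely many values, since the $O_L$ constant may be enlarged.
\end{itemize}
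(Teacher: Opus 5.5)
Your proof is correct and follows essentially the paper's route: you reduce to $\tau(k,L)=\frac{k!}{L!}\stir{k-2}{k-L}$ and observe that, among block-size profiles of total deficit $r=L-2$, only the profile with $r$ blocks of size $2$ reaches degree $2r$, with coefficient $1/(2^r r!)$. The one cosmetic difference is that you read this off the partition-sum formula for $P_r(N)$ already established in the proof of Theorem~\ref{thm:polylog}, whereas the paper counts the pair-block partitions again directly.
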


\begin{proof}
Set $N=k-2$ and $r=L-2$.  We first show that
\[
\stir{N}{N-r}=\frac{N^{2r}}{2^r r!}+O_r(N^{2r-1}).
\]
A partition of $N$ elements into $N-r$ blocks has total deficit $r$, where a block of size $j$ contributes deficit $j-1$.  The leading contribution comes from the profile consisting of exactly $r$ blocks of size $2$ and all remaining blocks singletons.  The number of such partitions is
\[
\frac{1}{r!}\binom{N}{2}\binom{N-2}{2}\cdots\binom{N-2r+2}{2}
=\frac{N^{2r}}{2^r r!}+O_r(N^{2r-1}).
\]
Every other block-size profile with total deficit $r$ uses at most $2r-1$ elements in non-singleton blocks, and therefore contributes only $O_r(N^{2r-1})$ partitions.  This proves the displayed estimate for the near-diagonal Stirling number.  Since
\[
\tau(k,L)=\binom{k}{L}(k-L)!\stir{k-2}{k-L}=\frac{k!}{L!}\stir{k-2}{k-L},
\]
and $k-L=(k-2)-(L-2)=N-r$, substitution gives the result.
\end{proof}

\begin{theorem}[Sharp top-centered star-avoidance asymptotic]\label{thm:sharpasymp}
For each fixed $n\ge3$,
\[
A^{\mathrm{top}}_n(k)=
\frac{k!}{2^{n-2}(n-2)!(n-1)!}k^{2n-5}
+O_n(k!k^{2n-6}).
\]
Consequently,
\[
\frac{A^{\mathrm{top}}_n(k)}{k^{k-2}}
=\frac{\sqrt{2\pi}}{2^{n-2}(n-2)!(n-1)!}
 e^{-k}k^{2n-5/2}\left(1+O_n(k^{-1})\right).
\]
\end{theorem}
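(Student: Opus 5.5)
We prove Theorem~\ref{thm:sharpasymp} in two stages: first the polynomial asymptotic for $A^{\mathrm{top}}_n(k)$, then the conversion to a density against $k^{k-2}$ via Stirling's formula.

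\textbf{First stage.} We start from the exact formula of Theorem~\ref{thm:Aformula},
\[
A^{\mathrm{top}}_n(k)=\sum_{L=0}^{n-1}\tau(k,L)+\frac{n}{k}\tau(k,n),
\]
and estimate each term with Lemma~\ref{lem:fixedleafasymp}.

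The terms $L=0$ and $L=1$ vanish for $k\ge3$, since every tree on at least two vertices has at least two leaves.

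For $2\le L\le n-1$, Lemma~\ref{lem:fixedleafasymp} gives $\tau(k,L)=O_L(k!\,k^{2L-4})$. Since $2L-4\le 2n-6$, all these terms together contribute $O_n(k!\,k^{2n-6})$.

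The boundary term carries the main contribution. Applying Lemma~\ref{lem:fixedleafasymp} with $L=n$ and multiplying by $n/k$ gives
\[
\frac{n}{k}\tau(k,n)=\frac{n\,k!}{n!\,2^{n-2}(n-2)!}k^{2n-5}+O_n(k!\,k^{2n-6}).
\]
Using $n/n!=1/(n-1)!$, this yields the first displayed asymptotic. Alternatively, one can read it off Theorem~\ref{thm:polylog}: the leading coefficient of $Q_n$ is already identified there, the remaining part of $Q_n$ has degree at most $2n-6$, and $\gamma_n/k$ is of even lower order. Either route works; the direct one is cleaner for the error term.

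\textbf{Second stage.} We divide by $k^{k-2}$ and insert Stirling's formula in the form
\[
k!=\sqrt{2\pi k}\,k^ke^{-k}\bigl(1+O(k^{-1})\bigr).
\]
Then $k!/k^{k-2}=\sqrt{2\pi}\,k^{5/2}e^{-k}(1+O(k^{-1}))$. Multiplying by the leading term $k^{2n-5}/(2^{n-2}(n-2)!(n-1)!)$ gives the exponent
\[
2n-5+\tfrac52=2n-\tfrac52,
\]
which is the stated power.

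The error term $O_n(k!\,k^{2n-6})$ is a factor $k^{-1}$ smaller than the main term, so after factoring out the main term it becomes a relative error $O_n(k^{-1})$. This combines with Stirling's $1+O(k^{-1})$ into a single factor $1+O_n(k^{-1})$.

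\textbf{Main obstacle.} The only delicate point is the bookkeeping of error terms. We must check that the $L=n-1$ term, of order $k!\,k^{2n-6}$, is genuinely of lower order than the boundary term, of order $k!\,k^{2n-5}$. We also must remember that the boundary term loses a factor of $k$ through the coefficient $n/k$. This lost factor is exactly why the exponent is $2n-5$ rather than $2n-4$.
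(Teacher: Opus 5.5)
Your proof is correct and follows essentially the same route as the paper. Like the paper, you bound the terms with $L\le n-1$ by $O_n(k!\,k^{2n-6})$ via Lemma~\ref{lem:fixedleafasymp}, extract the main term from the boundary term $\frac{n}{k}\tau(k,n)$ using $n/n!=1/(n-1)!$, and convert to the density against $k^{k-2}$ with Stirling's formula.
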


\begin{proof}
By Theorem~\ref{thm:Aformula},
\[
A^{\mathrm{top}}_n(k)=\sum_{L=0}^{n-1}\tau(k,L)+\frac{n}{k}\tau(k,n).
\]
The terms with $L<2$ are zero for $k\ge2$.  By Lemma~\ref{lem:fixedleafasymp}, the largest term in the sum over $L\le n-1$ is $O_n(k!k^{2n-6})$.  The $n$-leaf term gives
\[
\frac{n}{k}\tau(k,n)
=\frac{n}{k}\left(
\frac{k!}{n!\,2^{n-2}(n-2)!}k^{2n-4}
+O_n(k!k^{2n-5})\right).
\]
Thus
\[
\frac{n}{k}\tau(k,n)=
\frac{k!}{2^{n-2}(n-2)!(n-1)!}k^{2n-5}
+O_n(k!k^{2n-6}),
\]
which proves the first assertion.  Dividing by $k^{k-2}$ and applying Stirling's formula
\[
k!=\sqrt{2\pi k}\,(k/e)^k\left(1+O(k^{-1})\right)
\]
gives the equivalent ratio estimate.
\end{proof}

\section{A threshold for growing stars}\label{sec:starthreshold}

The fixed-$n$ asymptotic describes a star whose number of leaves does not
change with the host.  When $n=n(k)$ grows, the question becomes whether a
random host has enough leaves to supply the star.  The Pr\"ufer code sequence of a tree helps show
that transition is at $k/e$.

\begin{theorem}[Coarse linear threshold for top-centered star containment]\label{thm:threshold}
Let $U_k$ be a uniformly random labeled tree on $[k]$.  Let $n=n(k)$ satisfy
$3\le n(k)<k$ for all sufficiently large $k$.  Then
\[
\mathbb P\bigl(U_k\text{ contains a top-centered display of }S_{n(k)}\bigr)
\to
\begin{cases}
1,&\text{if }\displaystyle \limsup_{k\to\infty}\frac{n(k)}{k}<\frac1e,\\[6pt]
0,&\text{if }\displaystyle \liminf_{k\to\infty}\frac{n(k)}{k}>\frac1e.
\end{cases}
\]
Thus the top-centered star-containment threshold for a random labeled tree on $[k]$ occurs at
\[
n\sim \frac{k}{e}.
\]
\end{theorem}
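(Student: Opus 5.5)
By Lemma~\ref{lem:leafcriterion}, the event that $U_k$ contains a top-centered display of $S_{n}$ is sandwiched between two leaf-count events:
\[
\{L(U_k)>n\}\subseteq\{U_k\text{ contains top-centered }S_n\}\subseteq\{L(U_k)\ge n\},
\]
where $L(U_k)$ is the number of leaves. So the theorem reduces to a concentration statement for $L(U_k)$ around $k/e$. We use the Pr\"ufer correspondence from the proof of Lemma~\ref{lem:tau}. A uniform tree on $[k]$ corresponds to a uniform word in $[k]^{k-2}$, and the leaves are exactly the labels absent from the word. Hence $L(U_k)$ is the number of empty bins when $k-2$ balls are thrown independently and uniformly into $k$ bins.

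Write $L(U_k)=\sum_{v=1}^k J_v$, where $J_v$ indicates that label $v$ is absent. The first step is the mean:
\[
\mathbb E L=k(1-1/k)^{k-2}=k/e+O(1).
\]
The second step is the variance. For $v\ne w$ we have $\mathbb E[J_vJ_w]=(1-2/k)^{k-2}$, and this differs from $(1-1/k)^{2(k-2)}$ by $O(1/k)$ times $e^{-2}$. Therefore
\[
\operatorname{Var}L\le \mathbb E L+k^2\bigl((1-2/k)^{k-2}-(1-1/k)^{2k-4}\bigr)=O(k).
\]
Chebyshev's inequality then gives $\mathbb P(|L-k/e|\ge\varepsilon k)=O(1/(\varepsilon^2k))\to0$ for every fixed $\varepsilon>0$. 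If a stronger tail is wanted, the indicators $J_v$ are negatively associated (balls into bins), or one can apply a bounded-differences (Azuma/McDiarmid) inequality to the $k-2$ independent letters, each of which changes $L$ by at most $1$. Either route gives exponentially small tails, but Chebyshev suffices.

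The third step concludes. If $\limsup n(k)/k<1/e$, fix $\varepsilon>0$ with $n(k)\le(1/e-2\varepsilon)k$ for all large $k$. Then $\{L>n\}\supseteq\{L\ge(1/e-\varepsilon)k\}$ eventually, and this event has probability tending to $1$. If $\liminf n(k)/k>1/e$, fix $\varepsilon$ with $n(k)\ge(1/e+2\varepsilon)k$ eventually. Then containment forces $L\ge n>(1/e+\varepsilon)k$, which has probability tending to $0$. The boundary case $L=n$ of Lemma~\ref{lem:leafcriterion} plays no role, because the sandwich absorbs it. The hypothesis $3\le n(k)<k$ only ensures that the star and the lemma make sense.

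The only genuine computation, and the main obstacle, is the variance estimate. One must confirm that the covariance term $k^2\bigl((1-2/k)^{k-2}-(1-1/k)^{2k-4}\bigr)$ is $O(k)$ rather than of order $k^2$. The plan is to expand both terms as $e^{-2}\bigl(1+c/k+O(k^{-2})\bigr)$ using $\log(1-x)=-x-x^2/2+O(x^3)$, and to check that the constants $c$ produce a difference of order $1/k$. If this expansion becomes messy, the fallback is McDiarmid's inequality applied directly to the Pr\"ufer word, which avoids the covariance computation altogether.
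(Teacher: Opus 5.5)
Your proposal is correct and follows essentially the same route as the paper's proof: the leaf criterion reduces containment to the leaf count, and the Pr\"ufer word writes $L(U_k)$ as a sum of absent-label indicators with $p_k=(1-1/k)^{k-2}$ and $q_k=(1-2/k)^{k-2}$. As in the paper, the expansion gives $q_k-p_k^2=-e^{-2}/k+O(k^{-2})$, hence $\operatorname{Var}L=O(k)$, and Chebyshev finishes both regimes. Your ``main obstacle'' is easier than you expected: $(1-1/k)^2>1-2/k$ gives $q_k<p_k^2$ directly, so $\operatorname{Var}L\le\mathbb E L$ with no expansion needed.
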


\begin{proof}
Let $L_k$ be the number of leaves of $U_k$.  By the leaf criterion, $U_k$ contains a top-centered display of $S_n$ whenever $L_k>n$, and avoids such a display whenever $L_k<n$.  In the boundary case $L_k=n$, containment depends on whether the largest label $k$ is a leaf.  Therefore, away from this boundary, the containment problem is controlled by $L_k$.

By the Pr\"ufer-code correspondence~\cite{Prufer}, $U_k$ is represented by a uniformly random word of length $k-2$ on the alphabet $[k]$.  A label is a leaf of $U_k$ if and only if it does not occur in this word.  Hence
\[
L_k=\sum_{i=1}^k I_i,
\]
where $I_i$ is the indicator that label $i$ is absent from the Pr\"ufer word.  We have
\[
p_k:=\mathbb P(I_i=1)=\left(1-\frac1k\right)^{k-2}
\]
and, for $i\ne j$,
\[
q_k:=\mathbb P(I_i=I_j=1)=\left(1-\frac2k\right)^{k-2}.
\]
Therefore
\[
\mathbb E[L_k]=kp_k.
\]
Since
\[
p_k=\left(1-\frac1k\right)^{k-2}=e^{-1}+O(k^{-1}),
\]
we get
\[
\mathbb E[L_k]=\frac{k}{e}+O(1).
\]
Next,
\[
\operatorname{Var}(L_k)
=k p_k(1-p_k)+k(k-1)(q_k-p_k^2).
\]
Using the expansions
\[
p_k=e^{-1}\left(1+\frac{3}{2k}+O(k^{-2})\right)
\]
and
\[
q_k=e^{-2}\left(1+\frac{2}{k}+O(k^{-2})\right),
\]
we obtain
\[
q_k-p_k^2=-\frac{e^{-2}}{k}+O(k^{-2}).
\]
Substitution gives
\[
\operatorname{Var}(L_k)=\left(e^{-1}-2e^{-2}\right)k+O(1),
\]
in particular $\operatorname{Var}(L_k)=O(k)$.  By Chebyshev's inequality,
\[
\frac{L_k}{k}\to \frac1e
\]
in probability.

Suppose first that
\[
\limsup_{k\to\infty}\frac{n(k)}{k}<\frac1e.
\]
Choose $\varepsilon>0$ such that, for all sufficiently large $k$,
\[
n(k)\le \left(\frac1e-\varepsilon\right)k.
\]
Since $L_k/k\to1/e$ in probability,
\[
\mathbb P(L_k>n(k))\to1.
\]
Whenever $L_k>n(k)$, Lemma~\ref{lem:leafcriterion} gives a top-centered display of $S_{n(k)}$.  Thus
\[
\mathbb P\bigl(U_k\text{ contains a top-centered display of }S_{n(k)}\bigr)\to1.
\]

Now suppose that
\[
\liminf_{k\to\infty}\frac{n(k)}{k}>\frac1e.
\]
Choose $\varepsilon>0$ such that, for all sufficiently large $k$,
\[
n(k)\ge \left(\frac1e+\varepsilon\right)k.
\]
Again using convergence in probability of $L_k/k$ to $1/e$, we have
\[
\mathbb P(L_k\ge n(k))\to0.
\]
Top-centered containment of $S_{n(k)}$ requires at least $n(k)$ leaves, so the containment probability tends to $0$.
\end{proof}

\section{Discussion and open problems}

The bounded-core theorem explains why display multiplicity is a finite
problem at each fixed collision order, but it does not yet recover the
support sequence.  Inclusion--exclusion mixes all collision orders, and the
survivor placements on a display still depend on the label set.
The main unresolved question is whether the resulting data determine the
target up to reversal.

The asymptotic theorem also shows where such information cannot be found.
For every fixed target, the proportion of containing trees tends to one
exponentially quickly.  Consequently, Conjecture~\ref{conj:RW} cannot be
settled by comparing limiting containment densities.  One must instead
distinguish the exponentially smaller avoidance sequences or extract
target-specific information from display collisions.

\begin{question}[Moment rigidity]
Does Wilf equivalence force equality of the second collision moments?
\[
\mu_{T_1}(m)=\mu_{T_2}(m)\text{ for every }m
\quad\Longrightarrow\quad
C_2(T_1;m)=C_2(T_2;m)\text{ for every }m?
\]
\end{question}

\begin{question}[Recovering pair cores]
Can the multiplicities of the survivor-decorated 2-cores be recovered
from finitely many values of $\mu_T(m)$?
\end{question}

\begin{question}[Sharpness of the core bound]
For $k\ge2$, is the bound $k(n-1)+1$ attained by a reduced $k$-overlay core
for every $n$?  If so, which overlay states attain it?
\end{question}

For stars, the top-centered restriction turns containment into a leaf-count
condition and makes exact enumeration possible.  The corresponding ordinary
problem remains open: if any label may survive as the center, several
possible center blocks compete in the same host.  It would be useful to know
whether ordinary $S_n$-avoidance has the same exponential order as
top-centered avoidance and, if not, how the freedom to choose the center
changes it.

\section*{Acknowledgements}
The author thanks Marigold Strupp for valuable discussions and contributions to star
enumeration, and Amrit Kandasamy for valuable discussions and contributions for asymptotics of stars. The author thanks Lalith Durbhakula for helpful discussions
and preliminary computational work on labeled-tree avoidance, contraction containment, and
small Wilf-equivalence data. The author thanks Simon Rubinstein-Salzedo for his mentorship
and guidance throughout this research project and all of the members of the Euler Circle
Combinatorics Research Group who offered guidance or help throughout this research.

\end{document}